\DeclarePairedDelimiterX{\norm}[1]{\lVert}{\rVert}{#1}
\DeclarePairedDelimiterX{\abs}[1]{\lvert}{\rvert}{#1}
\DeclareMathOperator*{\argmax}{arg\,max}
\newcommand\restr[2]{{
  \left.\kern-\nulldelimiterspace 
  #1 
  \right|_{#2} 
  }}
\newcommand{\bbP}{\mathbb{P}}
\newcommand{\bbR}{\mathbb{R}}
\newcommand{\bbE}{\mathbb{E}}
\newcommand{\bbN}{\mathbb{N}}
\newcommand{\calA}{\mathcal{A}}
\newcommand{\calH}{\mathcal{H}}
\newcommand{\calD}{\mathcal{D}}
\newcommand{\calX}{\mathcal{X}}
\newcommand{\calY}{\mathcal{Y}}
\newcommand{\calP}{\mathcal{P}}
\newcommand{\MMD}{\text{MMD}}
\newcommand{\eMMD}{\emph{MMD}}
\newcommand{\ID}{D_{I}}
\newtheorem{theorem}{Theorem}
\newtheorem{lemma}{Lemma}
\newtheorem{definition}{Definition}
\newtheorem{remark}{Remark}
\newtheorem{example}{Example}
\title{
Statistical Depth Meets Machine Learning: Kernel Mean Embeddings and Depth in Functional Data Analysis
}
\author[$\dagger$]{George Wynne}
\author[$\ddag$]{Stanislav Nagy}
\affil[$\dagger$]{
    Imperial College London, 
    Department of Mathematics,
    London, UK \newline
	{\small{\textit{E-mail:} \texttt{g.wynne18@imperial.ac.uk}}}}
\affil[$\ddag$]{
    Charles University,
	Faculty of Mathematics and Physics,
	Prague, Czech Rep. \newline
	{\small \textit{E-mail:} \texttt{nagy@karlin.mff.cuni.cz}}}
\date{\today}
\begin{document}

\makeatletter\let\Title\@title\makeatother
\maketitle

\begin{abstract}
Statistical depth is the act of gauging how representative a point is compared to a reference probability measure. The depth allows introducing rankings and orderings to data living in multivariate, or function spaces. Though widely applied and with much experimental success, little theoretical progress has been made in analysing functional depths. This article highlights how the common $h$-depth and related statistical depths for functional data can be viewed as a kernel mean embedding, a technique used widely in statistical machine learning. This connection facilitates answers to open questions regarding statistical properties of functional depths, as well as it provides a link between the depth and empirical characteristic function based procedures for functional data. 
\end{abstract}



\section{Introduction: Statistical Depth for Functional Data} 
Functional data analysis (FDA) concerns the study of observations that can be represented as functions, often residing in an infinite-dimensional space. In the recent decades, FDA saw remarkable progress, with many theoretical and practical problems successfully resolved. Often, however, statistical concepts used in finite-dimensional spaces do not readily generalise to random functions. As a result, alternative definitions and desired properties have to be used \citep{Ramsay1997, Ferraty_Vieu2006, horvath2012inference, Hsing2015}.  

An example of a prominent tool of multivariate analysis that is difficult to generalise to functional data is \emph{statistical depth}. Developed in the 1980s, statistical depth is an umbrella term for methods introducing orderings, ranks, and by extension, nonparametric statistical inference, to multivariate and more complex datasets. Let $\calX$ be a general topological sample space, and write $\calP(\calX)$ for the set of Borel probability measures on $\calX$; typical examples of $\calX$ are the Euclidean space ${\bbR}^d$ with $d \in \bbN$, or the space of square-integrable functions $L^{2}([0,1])$. Given $P\in\calP(\calX)$ a depth function $x \mapsto D(x;P)$, mapping $\calX$ to $[0,\infty)$, is meant to tell a user how representative of $P$ the point $x$ is. High values of the depth correspond to points $x$ located ``centrally" with respect to the given measure $P$; low depths flag atypical sample points, or outliers. An example of application of depth to two datasets is given in Figure~\ref{figure:depth}. Many depths have been proposed in $\calX = {\bbR}^d$; their comprehensive theory can be found in \citep{Zuo2000, Serfling2006} and an array of applications to multivariate analysis in \cite{Liu1999}. 

\begin{figure}[htpb]
    \centering
    \includegraphics[width=.45\linewidth]{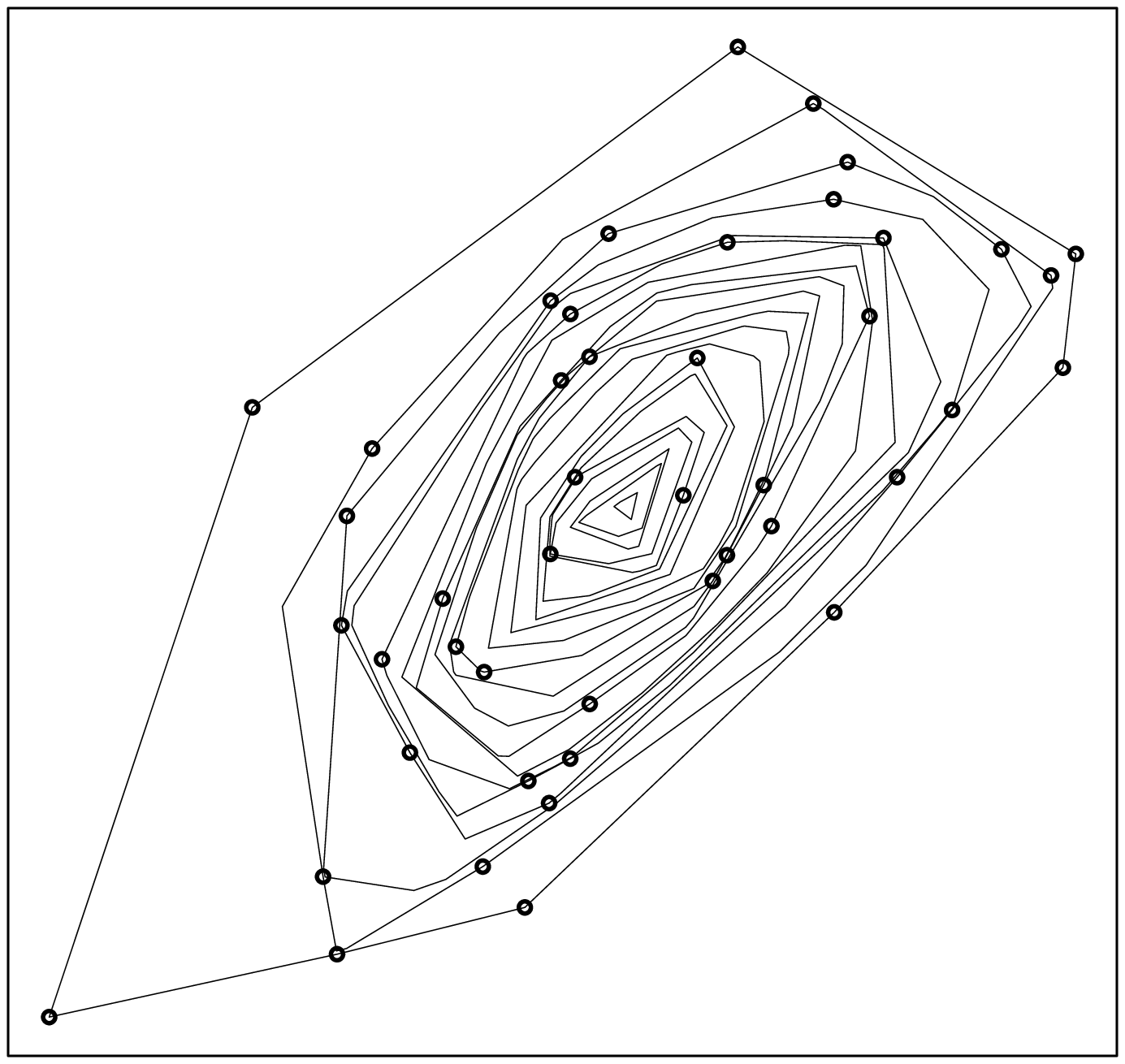} \quad \includegraphics[width=.45\linewidth]{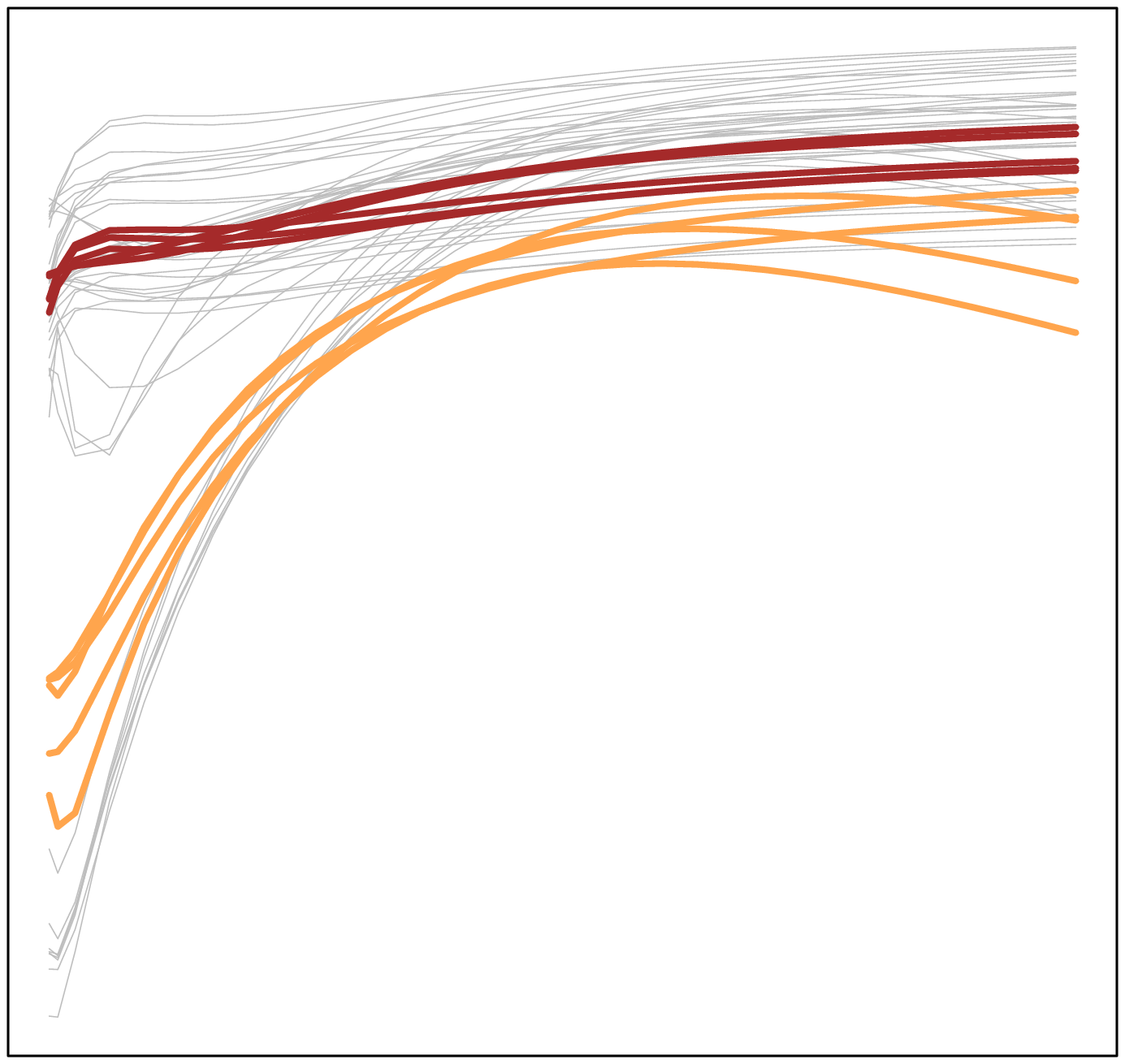}
    \caption{Left panel: Contours of halfspace depth \cite{Tukey1975} applied to a random sample of points in $\bbR^2$. The depth orders the sample points in terms of their centrality with respect to the geometry of the whole random sample. Right panel: A random sample of functional data $[0,1] \to \bbR$ with the observations attaining higher values of $h$-depth \cite{Cuevas2006} plotted in dark thick lines, and the data with low values of $h$-depth in light thick lines.}
    \label{figure:depth}
\end{figure}

Most of the standardly used finite-dimensional depths, such as the halfspace \cite{Tukey1975, Donoho_Gasko1992}, simplicial \cite{Liu1990}, or zonoid \cite{Koshevoy_Mosler1997, Mosler2002} depth are not possible to extend to function spaces directly. Instead, alternative approaches have been proposed in the literature \citep{Fraiman_Muniz2001, LopezPintado2009, Cuevas2009, NietoReyes2016, Gijbels2017}. These different notions of functional depths have found many applications in data visualisation \cite{Sun_Genton2011}, nonparametric estimation \cite{Fraiman_Muniz2001}, outlier detection \citep{Febrero2008}, classification \citep{LopazPintado2006}, or testing \citep{LopezPintado2009}, to name a few. 

Among functional depths, the \emph{$h$-depth} \citep{Cuevas2006} (also called the \emph{$h$-modal depth}, or the \emph{$h$-mode depth}) has emerged as a convenient choice which satisfies many of the desired criteria of a depth for functional inputs \citep{NietoReyes2016, Gijbels2017}. It is defined as follows.

\begin{definition}\label{def:h_mode}
Let $\calX$ be a vector space equipped with a norm $\norm{\cdot}_{\calX}$, $P\in\calP(\calX)$ and $\kappa\colon[0,\infty)\rightarrow[0,\infty)$ be a continuous, non-increasing function with $\kappa(0) > 0$ and $\lim_{t\rightarrow\infty}\kappa(t) = 0$. The \emph{$h$-depth} of $x\in\calX$ with respect to $P$ is defined as
\begin{align}
    D_{\kappa}(x;P) = \bbE_{X\sim P}\left[\kappa(\norm{x-X}_{\calX})\right].\label{eq:h_mode}
\end{align}
\end{definition}

A common choice of function $\kappa$ in \eqref{eq:h_mode} is $\kappa(t) = e^{-t^{2}/2}$, which gives rise to an $h$-depth that is sometimes referred to as the \emph{Gauss $h$-depth}.

\begin{remark}	\label{remark:bandwidth}
The $h$ in the name $h$-depth is from the rescaling $\kappa_{h}(\cdot) = h^{-1}\kappa(\cdot/h)$ used in practice in \eqref{eq:h_mode} instead of $\kappa$. The parameter $h>0$ plays the role of a bandwidth, but in this context is typically taken to be fixed to a constant rather than altering with new data \citep{Cuevas2007}. For ease of presentation we assume without loss of generality that $h=1$, for other values our exposition is identical with $\kappa_{h}$ replacing $\kappa$. 
\end{remark}

The sample $h$-depth takes a form similar to a kernel density estimator applied in a general normed vector space $\calX$. More precisely, let $X_1, \dots, X_n$ be a random sample of independent identically distributed (i.i.d.) variables generated from $P \in \calP(\calX)$, and let $P_{n} = n^{-1}\sum_{i=1}^{n}\delta_{X_{i}} \in \calP(\calX)$ be its random empirical measure. The sample $h$-depth \eqref{eq:h_mode} with respect to $P_n$ takes the form 
	\begin{equation}	\label{eq:sample h_depth}
	D_{\kappa}(x;P_{n}) = n^{-1}\sum_{i=1}^{n}\kappa(\norm{x-X_{i}}_{\calX}).	
	\end{equation}
From this expression we observe that the $h$-depth extends the notions of likelihood-based depths \cite{Fraiman_Meloche1999, Fraiman_etal1997} from $\bbR^d$ to functional data, and draws connections of the statistical depth with density-like concepts explored in function spaces \cite{Gasser_etal1998, Ferraty_Vieu2006}. The sample $h$-depth \eqref{eq:sample h_depth} is fast to compute and easily interpretable. Even though in $\calX = \bbR^d$ the concept does not fit directly into the framework of (global) statistical depths \cite{Zuo2000, Serfling2006} but rather to their localised counterparts \cite{Paindaveine_VanBever2013}, in function spaces it proved to be highly competitive \cite{Cuevas2006, Cuevas2007, Febrero_etal2007, Febrero2008, Chakraborty_Chaudhuri2014, NietoReyes2016, Pokotylo_Mosler2019}. The $h$-depth is frequently used as a well performing benchmark method in nonparametric FDA \cite{Gervini2012, Kuhnt_Rehage2016, Cuesta_etal2017}, and is available in standard FDA software packages \cite{R_fda.usc, ddalpha}. 

Despite its appeal in the practice of FDA, the theoretical properties of the $h$-depth in function spaces $\calX$ are largely unexplored \citep{Nagy2015, Nagy2016discrete, Gijbels2017} and many open questions remain \citep[Chapter 8]{Nagy2016}. These include consistency and asymptotics of $h$-depth estimators and the theory for testing based on this depth. A major open problem of functional depths is the characterisation of probability measures via the depth: Is the measure $P$ on $\calX$ uniquely characterised by its depth mapping $x \mapsto D(x;P)$? So far, no depth applicable to function spaces was proved to satisfy this highly desirable property \cite{Mosler_Mozharovskyi2021}.

This paper aims to summarise and leverage a surprising relationship between $h$-depth and \emph{kernel mean embeddings} (KME) to address these open questions, and shall use a mixture of existing and new results to do so. KMEs are well-studied in statistical machine learning, but they have not been utilised yet in the FDA literature. In the framework of KME, a kernel on $\calX$ is a symmetric positive definite function $k \colon \calX \times \calX \to \bbR$. Given a kernel $k$ a kernel mean embedding maps $P$ into the reproducing kernel Hilbert space (RKHS) \citep{Aronszajn1950} corresponding to $k$. That results in easy to manipulate expressions used to perform inference and tests for $\calX$-valued data \citep{Berlinet2004,Gretton2012,Sriperumbudur2010,Muandet2017}. Although not under the same name, KMEs have been studied for decades \citep{Guilbart1978} and in the last twenty years have become a successful statistical machine learning method \citep{Gretton05,Gretton2007} with applications to a wide range of data sources \citep{Gretton2012,Borgwardt2006}. Little attention has, however, been applied to functional data in the statistical machine learning literature. This paper aims to bridge the gap between FDA and statistical machine learning by applying KME, a statistical machine learning concept so far not used in FDA, to functional data.

The rest of this paper is structured as follows: In Section \ref{sec:KME} we introduce the kernel mean embeddings, RKHS and maximum mean discrepancy, and in Section~\ref{sec:Equivalance} we discuss the equivalence of those concepts from machine learning and the $h$-depth. A special class of integrated depths for functional data \cite{Cuevas2009, Nagy_etal2016}, including another popular functional depth called the \emph{(random) functional projection depth} \citep{Cuevas2007}, is also shown to be equivalent with KMEs in Section~\ref{subsec:integrated_depth}. Section \ref{sec:h_mode_KME} leverages the link between KME and $h$-depth to prove an array of new results regarding asymptotics and consistency of estimators of $h$-depth. Under minimal conditions we verify the uniform consistency and asymptotic normality of the sample version of $h$-depth in function spaces $\calX$, for both perfectly observed functional data, and discretised functional observations. Rates of convergence of the corresponding estimators are provided. Our results are remarkable because of their universality --- in contrast with both common functional depths \cite{Gijbels2017} and functional pseudo-density estimators \cite{Ferraty_Vieu2006}, the asymptotics of $h$-depth is shown to be valid without any restrictions on the probability measure $P$, and holds true on the whole infinite-dimensional sample space $\calX$. Results of this type are quite rare in nonparametric FDA, but as we show they follow directly from the related theory developed for KMEs. In Section \ref{sec:characterising} we demonstrate that under a mild condition, the $h$-depth characterises all probability measures in Hilbert spaces $\calX$. That result should be compared with the related advances in the theory of the depth in $\calX = \bbR^d$ \cite{Struyf_Rousseeuw1999, Mosler2002, Nagy2019c, Mosler_Mozharovskyi2021}. We show that the $h$-depth is the first statistical depth completely characterising not only all probability distributions in $\bbR^d$, but even in infinite-dimensional Hilbert spaces $\calX$. As such, the $h$-depth turns out to be of great interest in nonparametric FDA, where no natural and simply interpretable density-like concept characterising measures exists. In Section~\ref{sec:testing} we focus on statistical testing, and outline the equivalence between empirical characteristic function based tests, quite popular in FDA, and $h$-depth. Finally, Section~\ref{sec:Conclusion} provides concluding remarks. 


\section{Kernel Mean Embeddings}\label{sec:KME}
Kernel mean embeddings (KMEs) are an easily computable nonparametric method of reasoning with probability distributions.  Over the past 15 years, KMEs have become a commonplace tool in statistical machine learning. The earliest developments in the 1970s were done from a theoretical perspective, focusing on their topological properties \citep{Guilbart1978}; for a summary see \citep[Chapter 4]{Berlinet2004}. The first uses of KMEs within statistical machine learning were described in \citep{Gretton05,Gretton2007}, and they have found numerous applications since \citep{Sutherland2016,Briol2019,Muandet2017}. Kernel-based methods in general have been recently finding wider use also within FDA \citep{Kadri2016,Kupresanin2010,Berrendero2020}.

\subsection{Kernels and Reproducing Kernel Hilbert Spaces}
First of all, it is important to note that the definition of a kernel in machine learning is different from the definition of a kernel used in other areas of statistics, such as kernel smoothing or kernel density estimation. For example, in the FDA literature the function $\kappa$ in Definition~\ref{def:h_mode} is usually referred to as a kernel. Therefore, it is necessary to establish that the definition of a kernel we are using can be different from the definition that the reader may be familiar with before proceeding further.

\begin{definition}\label{def:kernel}
Let $\calX$ be a set. We call a function $k\colon \calX\times\calX\rightarrow \bbR$ a \emph{kernel} if it is \begin{enumerate*}[label=(\roman*)] \item symmetric, meaning $k(x,y) = k(y,x) \:\forall x,y\in\calX$, and \item positive definite, that is $\sum_{i,j=1}^{n}a_{i}a_{j}k(x_{i},x_{j})\geq 0$ for all $n\in\bbN$, $\{a_{i}\}_{i=1}^n\subset\bbR$ and $\{x_{i}\}_{i=1}^n \subset\calX$. \end{enumerate*}
\end{definition}

An example of a kernel over $\calX = \bbR^{d}$ is the squared exponential (also known as Gaussian) kernel $k(x,y) = e^{-\frac{1}{2}\norm{x-y}_{\bbR^{d}}^{2}}$. An analogous kernel can be defined on any normed space by replacing the Euclidean norm $\norm{\cdot}_{\bbR^{d}}$ with the norm on $\calX$. Before providing an array of other examples of kernels in Section \ref{subsec:examples}, we will now describe the motivation behind the definition of a kernel. The central idea is that a kernel is a measure of ``similarity" between its two inputs via an implicit inner product. We begin by describing how this occurs mathematically through the use of feature maps, and then comment on why kernels are useful for statistical machine learning tasks. 

\smallskip\noindent\textbf{Kernels as implicit inner products.} Inner products are a natural way to measure similarity between two inputs. For example, the standard inner product $\langle f,g\rangle_{L^{2}([0,1])}$ in $\calX = L^{2}([0,1])$ may be perceived as a measure of similarity --- for two functions $f,g \in L^2([0,1])$ of the same norm, the inner product $\langle f,g\rangle_{L^{2}([0,1])}$ is maximized if and only if $f = g$, thanks to the usual Cauchy-Schwarz inequality \cite[Corollary~5.1.4]{Dudley2002}. To obtain more complex measures of similarity one can transform an input before applying an inner product. Such a transform is known as using a feature map, or feature expansion, in machine learning \citep{Steinwart2008,ShaweTaylor2004}. Formally, if for a Hilbert space $\calH$ equipped with an inner product $\left\langle \cdot, \cdot \right\rangle_{\calH}$ and a function $\varphi\colon\calX\rightarrow\calH$ we have $k(x,y) = \langle \varphi(x),\varphi(y)\rangle_{\calH}$, then we say that $\varphi$ is a \emph{feature map} for $k$. 

\begin{example}
Let $\calX = \bbR$ and $k(x,y) = (xy+1)^{2}$. Setting $\calH = \bbR^{3}$ and $\varphi(x) = (x^{2},\sqrt{2}x,1)$ gives $k(x,y) = \langle \varphi(x),\varphi(y)\rangle_{\bbR^{3}}$. This example can straightforwardly be generalised to $\calX = \bbR^{d}$.
\end{example}

For any function $\varphi\colon\calX\rightarrow\calH$ with $\calH$ a Hilbert space, $k(x,y) = \langle \varphi(x),\varphi(y)\rangle_{\calH}$ is a kernel. The symmetry of $k$ follows from the symmetry of the inner product in $\calH$, and $\sum_{i,j=1}^{n}a_{i}a_{j}k(x_{i},x_{j}) = \langle\sum_{i=1}^{n}a_{i}\varphi(x_{i}),\sum_{j=1}^{n}a_{j}\varphi(x_{j})\rangle_{\calH}\geq 0$ for any $n \in \bbN$, $\{a_{i}\}_{i=1}^n \subset \bbR$ and $\{x_{i}\}_{i=1}^n \subset \calX$, so $k$ is positive definite. This begs the opposite question: Does every kernel have a feature map? The answer is affirmative \citep[Theorem 4.16]{Steinwart2008}, which justifies the statement that a kernel is a measure of similarity using an implicit inner product. 

\smallskip\noindent\textbf{Kernels in machine learning.} The reason this implicit representation is helpful is twofold. First, it is easier to check that a function is a kernel using Definition \ref{def:kernel} than to manually derive its feature map. Second, many numerical algorithms in machine learning, such as the least-squares estimator in linear regression, end up only depending on the inner products of the data. This provides the opportunity to ``kernelise" such algorithms by replacing the standard inner product with a kernel. Examples are the kernel ridge regression \citep{Steinwart2008}, support vector machines \citep{Taylor2005} and kernel ICA \citep{Bach2002}. The use of kernelised algorithms is equivalent to performing the algorithm on the $\calH$-valued data after the original $\calX$-valued data have been passed through the feature map $\varphi$. This explains the use of the terminology feature map, as it extracts features of the data which help to enhance the performance of the algorithm. Again, we do not need the feature map explicitly to do this, only the kernel.

\smallskip\noindent\textbf{Reproducing kernel Hilbert spaces.} For a given kernel $k$ the choice of $\calH$ and $\varphi$ is not unique. However, there is a canonical choice, called the reproducing kernel Hilbert space (RKHS) and a canonical feature map corresponding to the kernel.

\begin{definition}
A Hilbert space $\calH$ of functions from a set $\calX$ to $\bbR$ is called a \emph{reproducing kernel Hilbert space} if there exists a kernel $k$ on $\calX$ such that $k(\cdot,x)\in \calH$ for all $x\in\calX$ and 
	\begin{equation}	\label{eq:reproducing property}
	\langle f,k(\cdot,x)\rangle_{\calH} = f(x) \quad \mbox{for all $f\in \calH,x\in\calX$}.
	\end{equation}
The identity \eqref{eq:reproducing property} is called the \emph{reproducing property}.
\end{definition}

The Moore-Aronszajn theorem \cite{Aronszajn1950} guarantees there is a one-to-one relationship between kernels and RKHSs. This justifies why the RKHS of a kernel $k$ is considered a canonical choice of a feature space. Due to that relationship, it is common to use $\calH_{k}$ to denote the RKHS of $k$ and $\langle\cdot,\cdot\rangle_{k},\norm{\cdot}_{k}$ to denote the inner-product and the induced norm on $\calH_k$, respectively.

Setting $\varphi \colon \calX \to \calH_k, x \mapsto \varphi(x) = k(x,\cdot)$, meaning that $\varphi$ is a function-valued map, shows that $k(x,y) = \langle \varphi(x),\varphi(y)\rangle_{k}$ by the reproducing property \eqref{eq:reproducing property}. Thus, $\varphi$ is the canonical feature map. Using the Cauchy-Schwarz inequality \cite[Corollary 5.1.4]{Dudley2002} along with the reproducing property \eqref{eq:reproducing property} is a common trick for obtaining bounds for functions in an RKHS. Namely, for any $x \in \calX$ and $f \in \calH_k$ we have 
\begin{align}
    \lvert f(x)\rvert = \lvert \langle f,k(x,\cdot)\rangle_{k}\rangle \leq \norm{f}_{k}\norm{k(x,\cdot)}_{k} = \norm{f}_{k}\sqrt{k(x,x)}.\label{eq:cauchy_scwhwarz}
\end{align} This formula shows that the RKHS norm controls the supremum norm of $f$ if $k(x,x)$ is bounded for $x \in \calX$, which is true for nearly all kernels commonly used in practice. 

Explicit forms for RKHS when $\calX = \bbR^{d}$ are known for a wide range of kernels. Often, spectral properties of the kernels are used to obtain the characterisations \citep{Berlinet2004,Kanagawa2018Review}. When $\calX$ is non-Euclidean, it is harder to derive interpretable representations of RKHSs, but results in this direction for $\calX$ a separable Hilbert space and a particular family of kernels are provided in \citep[Theorem 2, Proposition 2]{Wynne2020a} and \citep{nelsen2020random}.

The reproducing property \eqref{eq:reproducing property} allows one to easily manipulate quantities such as integrals and expectations of functions from $\calH_k$ using the kernel, which would be difficult in other function spaces. As a result, RKHSs are widely studied in statistical machine learning since they facilitate the analysis of many machine learning algorithms involving kernels \citep{Hofmann2008,ShaweTaylor2004,Steinwart2008}. For a summary of the basic properties of RKHSs with some applications to statistics see \citep{Steinwart2008,Berlinet2004,Paulsen2016}. 

\subsection{Examples of Kernels}\label{subsec:examples}

For $\calX = \bbR^{d}$ the most basic kernel is the standard Euclidean inner product kernel $k(x,y) = \langle x,y\rangle_{\bbR^{d}}$. The corresponding RKHS is the set of functions of the form $f_{y}(x) = \langle x,y\rangle_{\bbR^{d}}$ for some $y\in\bbR^{d}$ and the inner product of the RKHS is $\langle f_{x},f_{y}\rangle_{k} = \langle x,y\rangle_{\bbR^{d}}$. 

A kernel on $\bbR^{d}$ commonly used for spatial modelling \citep{Stein1999} is the Mat\'ern-$3/2$ kernel $k(x,y) = (1+\sqrt{3}\norm{x-y}_{\bbR^{d}})e^{-\sqrt{3}\norm{x-y}_{\bbR^{d}}}$ which is a particular instance of the wider Mat\'ern class \citep{Berlinet2004}. The RKHS of such kernels are Sobolev spaces \citep{Kanagawa2018Review} of functions that map from subsets of $\bbR^{d}$ to $\bbR$. 

\begin{remark}	\label{remark:bandwidth for kernel}
Note that given any kernel $k$ one can generate classes of kernels by introducing hyper-parameters. The most typical is a bandwidth obtained by replacing $k(x,y)$ with $k(x/\gamma,y/\gamma)$ for some $\gamma > 0$. As we will see in Section~\ref{sec:Equivalance}, this corresponds to taking different values of the tuning parameter $h > 0$ in $h$-depth ass discussed in Remark~\ref{remark:bandwidth}.
\end{remark}

Two kernels which are popular for use in kernel mean embeddings, the main topic of this paper to be introduced in Section \ref{subsec:KME}, are the \emph{squared exponential} (SE) and \emph{inverse multi-quadric} (IMQ) kernels. We will present them now in the generality required to deal with inputs from function spaces \citep{Wynne2020a}. Let $\calX,\calY$ be Hilbert spaces and $T\colon\calX\rightarrow\calY$. The SE-$T$ kernel is
	\begin{equation}	\label{eq:SE kernel}
	k_{\text{SE-$T$}}(x,y) = e^{-\frac{1}{2}\norm{T(x)-T(y)}_{\calY}^{2}},	
	\end{equation}
and the IMQ-$T$ kernel is 
	\begin{equation}	\label{eq:IMQ kernel}
	k_{\text{IMQ-$T$}}(x,y) = (\norm{T(x)-T(y)}_{\calY}^{2}+1)^{-1/2}.	
	\end{equation}
In the special case $\calX = \calY$ and $T = \gamma^{-1}I$ with $I$ the identity map and $\gamma > 0$ we recover the bandwidth hyper-parameter from Remark~\ref{remark:bandwidth for kernel}. For $\calX = \bbR^{d}$ the SE-$I$ kernel is also referred to as the Gauss kernel. Its RKHS has been investigated in \citep{Minh2009,Steinwart2006}. For different choices of $T$ the RKHS of the SE-$T$ kernel in more general spaces $\calX$ was studied in \citep[Theorem 2]{Wynne2020a}.

\smallskip\noindent\textbf{Translation invariant kernels.} The important collection of \emph{translation invariant kernels} in a Hilbert space $\calX$, meaning $k(x,y) = \phi(x-y)$ for some $\phi \colon \calX \to \bbR$, has an intimate connection with the Fourier transform of probability measures. Recall that for $\mu\in\calP(\calX)$, the Fourier transform of $\mu$ is defined as $\widehat{\mu}(x) = \int_{\calX}e^{i\langle x,v\rangle_{\calX}}d\mu(v)$. In probability theory, the Fourier transform is also known as the characteristic function of measure $\mu$. When $\calX$ is finite-dimensional, Bochner's theorem \citep{Bochner1959} states that a kernel is translation invariant with $\phi(0) = 1$ and continuous if and only if $k(x,y) = \widehat{\mu}(x-y)$ for some $\mu\in\calP(\calX)$. For example, in $\calX = \bbR^{d}$ we have $k_{\text{SE-$I$}}(x,y) = \widehat{N}_{I_{d}}(x-y)$ for $N_{I_{d}}$ the standard $d$-variate normal distribution.

When $\calX$ is an infinite-dimensional Hilbert space, Bochner's theorem cannot be applied in the description of translation invariant kernels. Instead, the more involved Minlos-Sazonov theorem must be used, see e.g.\ \citep[Theorem VI.1.1]{Vakhania1987} or \citep[Theorem 1.1.5]{Maniglia2004}. To give a simple infinite-dimensional example define $L^{+}_{1}(\calX)$ to be the space of symmetric, positive, trace class operators from $\calX$ to $\calX$ \citep{DaPrato2006}. Then for $C\in L^{+}_{1}(\calX)$ we have 
	\begin{equation}	\label{eq:SE relation}
	k_{\text{SE-$C^{1/2}$}}(x-y) = \widehat{N}_{C}(x-y),
	\end{equation}
where $N_{C}$ is the Gaussian measure on $\calX$ with mean zero and covariance operator $C$, and $C^{1/2}$ is square root operator of $C$. 

\subsection{Definition of Kernel Mean Embedding}\label{subsec:KME}
We are now ready to define the kernel mean embeddings, the main object of study of this paper. It will allow us to draw connections to $h$-depth and empirical characteristic function based testing procedures. 

\begin{definition}\label{def:KME}
Suppose $\calX$ is a measurable space, $k$ a measurable kernel, and define $\calP_{k}\subseteq\calP(\calX)$ as $\{P\in\calP(\calX)\colon \int_{\calX}\sqrt{k(x,x)}dP(x)<\infty\}$. For $P\in\calP_{k}$ define the \emph{kernel mean embedding} $\Phi_{k}P$ of $P$ as
\begin{align}
    \Phi_{k}P\coloneqq \int_{\calX} k(\cdot,x)dP(x).\label{eq:KME}
\end{align}
\end{definition}

A few comments are in order. The integral in \eqref{eq:KME} is to be understood as a Bochner integral \citep[Chapter 2]{Hsing2015}. Therefore, $\Phi_{k}P$ is an element of $\calH_{k}$, meaning it is a function from $\calX$ to $\bbR$. It represents the linear operator from $\calH_{k}$ to $\bbR$ given by $f\mapsto \int_{\calX}f(x)dP(x)$. This linear operator is bounded since
\begin{align}
    \left\lvert\int_{\calX}f(x)dP(x)\right\rvert \leq \int_{\calX}\lvert f(x)\rvert dP(x)  & = \int_{\calX}\lvert\langle f,k(\cdot,x)\rangle_{k}\rvert dP(x)\nonumber\\
    & \leq \norm{f}_{k}\int_{\calX}\sqrt{k(x,x)}dP(x)\leq c_{k}\norm{f}_{k},\label{eq:CS_RKHS}
\end{align}for some $c_{k}<\infty$ by the assumption on $P$, where \eqref{eq:CS_RKHS} is by \eqref{eq:cauchy_scwhwarz}. The interpretation of $\Phi_{k}P$ as a Bochner integral allows us to write for $f\in\calH_{k}$
\begin{align}
    \bbE_{X\sim P}[f(X)] = \bbE_{X\sim P}[\langle f,k(\cdot,X)\rangle_{k}] = \langle f,\bbE_{X\sim P}[k(\cdot,X)]\rangle_{k} = \langle f,\Phi_{k}P\rangle_{k},\label{eq:bochner_exp}
\end{align}where the first equality is by the reproducing property \eqref{eq:reproducing property} and the second by the definition of the Bochner integral \cite{Gine2015}.

How much smaller $\calP_{k}$ is compared to $\calP(\calX)$? It turns out that $\calP_{k} = \calP(\calX)$ if and only if $k$ is bounded \citep[Proposition 2]{Sriperumbudur2010}, which is satisfied by nearly all kernels used in practice. The integral $\Phi_{k}P$ is empirically estimated given i.i.d.\ samples $\{X_{i}\}_{i=1}^{n}$ from $P$ by replacing $P$ with the empirical measure $P_{n} = n^{-1}\sum_{i=1}^{n}\delta_{X_{i}}$. We obtain
	\begin{equation}	\label{eq:sample KME}
	\Phi_{k}P_{n} = n^{-1}\sum_{i=1}^{n}k(\cdot,X_{i}). 
	\end{equation}
The discussion so far has assumed little structure on $\calX$; indeed we could take $\calX$ to be infinite-dimensional if one so desires.

Closed form expressions of KME for certain pairs of kernels and probability distributions exist. The most relevant to FDA are closed forms when $P$ is a Gaussian measure on $\calX$, i.e.\ a Gaussian process whose paths lie in $\calX$. Such formulas can be found in \citep{Kellner2019,Wynne2020a} for the SE-$T$ kernel from \eqref{eq:SE kernel}, see Example \ref{exp:mode_rates}. Closed form expressions in the case $\calX\subseteq\bbR^{d}$ can be found in \citep{Briol2019_integration}.

\subsection{Maximum Mean Discrepancy}
Armed with the definition of a KME, one may ask for $P,Q\in\calP_{k}$, how different $\Phi_{k}P$ and $\Phi_{k}Q$ are as functions in $\calH_k$? This is measured by the maximum mean discrepancy (MMD) defined as the norm in the RKHS of the difference of the two embeddings.
\begin{definition}
Given a measurable space $\calX$, a kernel $k$ on $\calX$ and $P,Q\in\calP_{k}(\calX)$ the \emph{maximum mean discrepancy} between $P$ and $Q$ is defined as $\eMMD_{k}(P,Q) = \norm{\Phi_{k}P-\Phi_{k}Q}_{k}$.
\end{definition}

Due to the reproducing property \eqref{eq:reproducing property} there are other interpretations of MMD in terms of the kernel. They will help us better understand how MMD behaves and how it depends on the properties of the kernel $k$. The next result is well known in the KME literature \citep{Sriperumbudur2010,Gretton2012} and we provide the proof in full for clarity.

\begin{theorem}\label{thm:MMD_reps}
Let $\calX$ be a topological space, $k$ a kernel on $\calX$ and $P,Q\in\calP_{k}(\calX)$. Then 
\begin{align}
    \eMMD_{k}(P,Q)^{2} & = \left(\sup_{\norm{f}_{k}\leq 1}\left\lvert\int_{\calX}f(x)dP(x) - \int_{\calX}f(x)dQ(x)\right\rvert\right)^{2}\label{eq:IPM}\\
    & = \int_{\calX}\int_{\calX}k(x,y)d(P-Q)(x)d(P-Q)(y). \label{eq:double_integral}
\end{align}
If $\calX$ is a Hilbert space and $k(x,y) = \widehat{\mu}(x-y)$ for some $\mu\in\calP(\calX)$ then
\begin{align}
    \eMMD_{k}(P,Q)^{2} = \int_{\calX}\lvert\widehat{P}(v)-\widehat{Q}(v)\rvert^{2}d\mu(v).\label{eq:L2}
\end{align}
\end{theorem}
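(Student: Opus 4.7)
For the first identity \eqref{eq:IPM}, the idea is to use \eqref{eq:bochner_exp} to rewrite the integrals as inner products in $\calH_k$. Specifically, for any $f\in\calH_k$ with $\norm{f}_k\le 1$, formula \eqref{eq:bochner_exp} gives
\[
\int_{\calX} f\,dP - \int_{\calX} f\,dQ = \langle f,\Phi_k P-\Phi_k Q\rangle_k,
\]
and Cauchy--Schwarz bounds the right-hand side by $\norm{\Phi_k P-\Phi_k Q}_k = \MMD_k(P,Q)$. Equality is attained by the usual Hilbert-space duality argument: take $f = (\Phi_k P - \Phi_k Q)/\norm{\Phi_k P - \Phi_k Q}_k$ (or the zero function if the embeddings coincide), which has unit norm and realises the supremum. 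This needs one sanity check: that such $f$ actually lies in $\calH_k$, which is immediate since $\Phi_k P,\Phi_k Q\in\calH_k$ by definition.

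For \eqref{eq:double_integral} I would expand the squared RKHS norm as $\langle \Phi_k P - \Phi_k Q, \Phi_k P - \Phi_k Q\rangle_k$ and treat each of the four cross-terms separately. The key identity is
\[
\langle \Phi_k P,\Phi_k Q\rangle_k = \int_{\calX}\int_{\calX} k(x,y)\,dP(x)\,dQ(y),
\]
which I would derive by first pulling the $dQ$ integral out using the Bochner integral property invoked in \eqref{eq:bochner_exp}, giving $\int \langle \Phi_k P, k(\cdot,y)\rangle_k\,dQ(y)$, and then applying the reproducing property \eqref{eq:reproducing property} and \eqref{eq:bochner_exp} once more to obtain $\int (\Phi_k P)(y)\,dQ(y) = \int\int k(x,y)\,dP(x)\,dQ(y)$. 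Summing the four terms and using symmetry of $k$ collapses the expression to the single double-integral against the signed measure $d(P-Q)\otimes d(P-Q)$ as claimed. Integrability of $k(x,y)$ against $\abs{P-Q}\otimes\abs{P-Q}$ follows from $P,Q\in\calP_k$ combined with $\abs{k(x,y)}\le \sqrt{k(x,x)k(y,y)}$, which is \eqref{eq:cauchy_scwhwarz} applied with $f = k(\cdot,y)$.

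For \eqref{eq:L2} I would substitute $k(x,y) = \widehat{\mu}(x-y) = \int_{\calX} e^{i\langle x-y,v\rangle_{\calX}}\,d\mu(v)$ into \eqref{eq:double_integral}, then apply Fubini to bring the $d\mu(v)$ integral outside, and split the exponential as $e^{i\langle x,v\rangle_{\calX}}e^{-i\langle y,v\rangle_{\calX}}$ to factor the integrand against $d(P-Q)(x)\,d(P-Q)(y)$ into a product of conjugate characteristic-function differences:
\[
\int_{\calX} e^{i\langle x,v\rangle_{\calX}}\,d(P-Q)(x) = \widehat{P}(v)-\widehat{Q}(v), \qquad \int_{\calX} e^{-i\langle y,v\rangle_{\calX}}\,d(P-Q)(y) = \overline{\widehat{P}(v)-\widehat{Q}(v)}.
\]
Their product is $\abs{\widehat{P}(v)-\widehat{Q}(v)}^2$, giving the claim after noting that both sides are real.

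The main obstacle is bookkeeping rather than a substantive difficulty: ensuring all integrals converge absolutely so that Fubini and the Bochner-integral interchange with the inner product are legitimate. For \eqref{eq:double_integral} this is handled by the $\calP_k$ assumption together with \eqref{eq:cauchy_scwhwarz}; for \eqref{eq:L2} boundedness of the exponential makes Fubini against the finite measure $\mu\otimes\abs{P-Q}\otimes\abs{P-Q}$ routine.
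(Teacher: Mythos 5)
Your proposal is correct and follows essentially the same route as the paper: rewrite the integrals via the Bochner-integral identity \eqref{eq:bochner_exp} and Cauchy--Schwarz for \eqref{eq:IPM}, expand the squared RKHS norm and compute $\langle\Phi_k P,\Phi_k Q\rangle_k$ as a double integral of $k$ for \eqref{eq:double_integral}, and substitute the Fourier representation of $k$ plus Fubini for \eqref{eq:L2}. Your extra care with the attainment of the supremum and the absolute-integrability checks only makes explicit what the paper leaves implicit.
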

\begin{proof}
First, by \eqref{eq:bochner_exp}
\begin{align*}
    \sup_{\norm{f}_{k}\leq 1}\left\lvert\int_{\calX}f(x)dP(x) - \int_{\calX}f(x)dQ(x)\right\rvert & = \sup_{\norm{f}_{k}\leq 1}\left\lvert \langle f,\Phi_{k}P-\Phi_{k}Q\rangle_{k}\right\rvert\\
    & = \norm{\Phi_{k}P-\Phi_{k}Q}_{k},
\end{align*} where the second equality is by Cauchy-Schwarz \cite[Corollary 5.1.4]{Dudley2002}. For the second representation of $\text{MMD}_{k}$,
\begin{align*}
    \norm{\Phi_{k}P-\Phi_{k}Q}_{k}^{2} = \langle \Phi_{k}P,\Phi_{k}P\rangle_{k} + \langle \Phi_{k}Q,\Phi_{k}Q\rangle_{k} - 2\langle \Phi_{k}P,\Phi_{k}Q\rangle_{k},
\end{align*}
so it suffices to compute $\langle\Phi_{k}P,\Phi_{k}Q\rangle_{k}$, and then set $P=Q$ for the other two terms. To this end, using \eqref{eq:bochner_exp} on $\Phi_{k}P$ gives 
\begin{align*}
    \langle\Phi_{k}P,\Phi_{k}Q\rangle_{k} = \bbE_{Y\sim Q}[\Phi_{k}P(Y)] = \int_{\calX}\int_{\calX}k(x,y)dP(x)dQ(y),
\end{align*}
which completes the proof of the second representation. For the last representation we write
\begin{align}
    \MMD_{k}(P,Q)^{2} & = \int_{\calX}\int_{\calX}k(x,y)d(P-Q)(x)d(P-Q)(y)\label{eq:quad}\\
    & = \int_{\calX}\int_{\calX}\int_{\calX}e^{i\langle x-y,v\rangle_{\calX}}d\mu(v)d(P-Q)(x)d(P-Q)(y)\label{eq:FT_formula}\\
    & = \int_{\calX}\left(\int_{\calX}e^{i\langle x,v\rangle_{\calX}}d(P-Q)(x)\right)\left(\int_{\calX}e^{-i\langle y,v\rangle_{\calX}}d(P-Q)(y)\right)d\mu(v)\label{eq:fubini}\\
    & = \int_{\calX}\lvert\widehat{P}(v)-\widehat{Q}(v)\rvert^{2}d\mu(v),\label{eq:conj}
\end{align} where \eqref{eq:quad} is using \eqref{eq:double_integral}, \eqref{eq:FT_formula} is by the definition of the Fourier transform for measures, \eqref{eq:fubini} is Fubini's theorem \cite[Theorem~4.4.5]{Dudley2002} and \eqref{eq:conj} comes from the product of a complex number and its conjugate being its absolute value squared.
\end{proof}

In Section \ref{subsec:examples} we provided examples of kernels that satisfy $k(x,y) = \widehat{\mu}(x-y)$ for some $\mu\in\calP(\calX)$. Quantities of the form \eqref{eq:IPM} are known as integral probability metrics \citep{muller1997integral}. Many common distances of probability measures are of this form --- prominent examples are the Wasserstein and the total variation distance, where the only difference is that the supremum in \eqref{eq:IPM} is taken over a different set. MMD can be easily estimated given samples from $P,Q$ by means of a standard empirical estimate of \eqref{eq:double_integral}. This approximation is very straightforward and has pleasant statistical properties. The representation \eqref{eq:L2} shows how MMD can be viewed as a weighted $L^{2}$-type distance between the Fourier transforms of probability measures. This will allow us to draw connections between MMD and testing procedures based on the empirical characteristic function in Section~\ref{sec:testing}.

\section{Equivalence of Kernel Mean Embeddings and \texorpdfstring{$h$}{h}-Depth}\label{sec:Equivalance}
An immediate connection between KME and $h$-depth can be made. It will be used in Section \ref{sec:h_mode_KME} to establish consistency and asymptotic normality of $h$-depth estimators in a streamlined way using the structure of the RKHS. In Section \ref{sec:characterising} we use it to show that $h$-depth characterises probability distributions completely. Although some instances of depth functions have already been used in machine learning research \citep{Gilad_Burges2013, Dutta_etal2016, Kleindessner_Luxburg2017}, as far as we are aware, this is the first solid connection of a depth notion with concepts from theoretical machine learning literature. The next theorem connects KME and $h$-depth. Its statement follows directly from Definitions~\ref{def:h_mode} and~\ref{def:KME}. 

\begin{theorem}\label{thm:h_mode_equivalence}
If $\calX$ is a normed vector space and 
	\begin{equation}	\label{eq:kernel by kappa}
	k(x,y) = \kappa(\norm{x-y}_{\calX})
	\end{equation}
is a kernel on $\calX$ with $\kappa$ satisfying the conditions in Definition \ref{def:h_mode}, then $D_{\kappa}(x;P) = \Phi_{k}P(x)$.
\end{theorem}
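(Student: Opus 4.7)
The plan is essentially to chase the definitions, with the only mildly non-trivial ingredient being the pointwise evaluation of the Bochner-integral-valued embedding $\Phi_k P$, for which the reproducing property does all the work.

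First, I would check that $P \in \calP_k$, so that the kernel mean embedding $\Phi_k P$ is genuinely well-defined as an element of $\calH_k$. This is immediate: since $\kappa$ is non-increasing on $[0,\infty)$, we have $k(x,x) = \kappa(0)$ for every $x \in \calX$, and $\kappa(0) \in (0,\infty)$, so $\int_\calX \sqrt{k(x,x)}\,dP(x) = \sqrt{\kappa(0)} < \infty$. Hence $P$ belongs to $\calP_k$ and the Bochner integral defining $\Phi_k P$ in \eqref{eq:KME} exists in $\calH_k$.

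Next, I would evaluate $\Phi_k P$ pointwise using the reproducing property \eqref{eq:reproducing property}. For any $x \in \calX$, applying the reproducing property to the element $\Phi_k P \in \calH_k$ yields
\begin{equation*}
\Phi_k P(x) = \langle \Phi_k P, k(\cdot,x)\rangle_k.
\end{equation*}
The identity \eqref{eq:bochner_exp}, which was derived in the paper from the defining property of the Bochner integral, states that $\bbE_{X\sim P}[f(X)] = \langle f, \Phi_k P\rangle_k$ for every $f \in \calH_k$. Taking $f = k(\cdot, x)$, which lies in $\calH_k$ by the definition of an RKHS, and using the symmetry of both the inner product and of $k$, I obtain
\begin{equation*}
\Phi_k P(x) = \langle k(\cdot,x), \Phi_k P\rangle_k = \bbE_{X\sim P}[k(x,X)].
\end{equation*}

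Finally, substituting the assumed form $k(x,y) = \kappa(\norm{x-y}_\calX)$ gives
\begin{equation*}
\Phi_k P(x) = \bbE_{X\sim P}\bigl[\kappa(\norm{x-X}_\calX)\bigr] = D_\kappa(x; P),
\end{equation*}
which is the desired equality. There is no real obstacle here: the only conceptual content is the exchange of expectation and inner product, and that has already been packaged into \eqref{eq:bochner_exp} earlier in the paper, so it may simply be cited.
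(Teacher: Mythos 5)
Your proof is correct and follows the same route the paper intends: the paper simply notes that the theorem ``follows directly from Definitions~\ref{def:h_mode} and~\ref{def:KME}'', and your argument is precisely that definitional chase, with the boundedness check $k(x,x)=\kappa(0)$ giving $P\in\calP_k$ and the reproducing property plus \eqref{eq:bochner_exp} giving $\Phi_kP(x)=\bbE_{X\sim P}[\kappa(\norm{x-X}_{\calX})]$. No gaps.
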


It is natural to ask what conditions are needed on $\kappa$ aside from those in Definition \ref{def:h_mode} to ensure that the corresponding $k$ defined in \eqref{eq:kernel by kappa} is a kernel. This is in fact a classical problem studied since the 1930s --- kernels of the form \eqref{eq:kernel by kappa} are known as \emph{radial kernels} \citep{Schoenberg1938} since they depend only on the normed difference of the two inputs. The next result was proven in \citep[Theorem 1.1]{Scovel2010} in the case of $\calX = \bbR^{d}$. But, the same proof works for $\calX$ a separable Hilbert space since the supporting results \citep{Schoenberg1938} also hold in that situation. Following \citep{Schoenberg1938}, call a function $f\colon[0,\infty)\rightarrow[0,\infty)$ \emph{completely monotone} if $(-1)^{n}f^{(n)}(t) \geq 0$ for all $t>0$ and all $n=0,1,2,\dots$.

\begin{theorem}\label{thm:radial}
Let $\calX$ be a separable Hilbert space, $\kappa\colon[0,\infty)\rightarrow[0,\infty)$ and $k(x,y) = \kappa(\norm{x-y}_{\calX})$. Then the following are equivalent:
\begin{enumerate}[label=(\roman*)]
    \item $k$ is a kernel.
    \item There exists a finite Borel measure $\mu$ on $[0,\infty)$ such that
    \begin{align}
        k(x,y) = \int_{0}^{\infty}e^{-t^{2}\norm{x-y}_{\calX}^{2}}d\mu(t).\label{eq:laplace}
    \end{align}
    \item $\kappa(\sqrt{\cdot})$ is completely monotone.
\end{enumerate}
\end{theorem}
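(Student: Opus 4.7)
The plan is to establish the cycle $(\text{ii}) \Rightarrow (\text{i}) \Rightarrow (\text{iii}) \Rightarrow (\text{ii})$, with the substantive work concentrated in $(\text{i}) \Rightarrow (\text{iii})$ via Schoenberg's 1938 theorem, and the remaining implications obtained from the Hausdorff--Bernstein--Widder theorem together with a Fubini argument.

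First, $(\text{ii}) \Rightarrow (\text{i})$ is immediate: for every $t \geq 0$ the map $(x,y) \mapsto e^{-t^{2}\norm{x-y}_{\calX}^{2}}$ is a kernel on $\calX$, since by \eqref{eq:SE relation} it coincides with the characteristic function of a Gaussian measure on $\calX$ and hence is positive definite. The expression \eqref{eq:laplace} writes $k$ as a nonnegative superposition of such kernels against the finite measure $\mu$, so symmetry is transparent and positive definiteness follows from a one-line Fubini exchange applied to $\sum_{i,j} a_i a_j \int_0^\infty e^{-t^2 \norm{x_i-x_j}_{\calX}^2} d\mu(t)$. Next, $(\text{iii}) \Rightarrow (\text{ii})$ follows from Hausdorff--Bernstein--Widder applied to the completely monotone function $g(s) \coloneqq \kappa(\sqrt{s})$: this yields a finite Borel measure $\nu$ on $[0,\infty)$ with $g(s) = \int_{0}^{\infty} e^{-s\tau} d\nu(\tau)$ and $\nu([0,\infty)) = g(0) = \kappa(0) < \infty$. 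Substituting $s = \norm{x-y}_{\calX}^{2}$ and pushing $\nu$ forward under $\tau \mapsto \sqrt{\tau}$ produces the measure $\mu$ appearing in \eqref{eq:laplace}.

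The main step is $(\text{i}) \Rightarrow (\text{iii})$, and this is where I would invoke Schoenberg's classical theorem \cite{Schoenberg1938} as a black box: a radial function $\kappa(\norm{\cdot}_{\calH})$ positive definite on a real separable Hilbert space $\calH$ must have $\kappa(\sqrt{\cdot})$ completely monotone. The internal logic of Schoenberg's argument is a reduction to the finite-dimensional case via isometric embeddings --- any finite point configuration in $\calH$ lies inside a finite-dimensional Euclidean subspace, so positive definiteness on $\calH$ transfers to positive definiteness of $\kappa(\norm{\cdot}_{\bbR^{n}})$ on $\bbR^{n}$ for every $n \in \bbN$, which in turn forces complete monotonicity of $\kappa(\sqrt{\cdot})$ by the classical finite-dimensional Schoenberg characterisation.

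The main obstacle of the proof is the correct invocation of Schoenberg's theorem: complete monotonicity is a strong analytic condition that must be teased out of positive definiteness in every finite dimension simultaneously. The extension from $\calX = \bbR^{d}$, as treated in \cite{Scovel2010}, to a general separable Hilbert space $\calX$ is essentially cosmetic, because the argument only ever accesses $\calX$ through finite subsets which already live in finite-dimensional Euclidean subspaces. No additional machinery beyond what is used in the $\bbR^{d}$ case is required.
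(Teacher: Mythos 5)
Your overall architecture --- the cycle (ii)$\Rightarrow$(i)$\Rightarrow$(iii)$\Rightarrow$(ii), with Hausdorff--Bernstein--Widder for (iii)$\Rightarrow$(ii) and Schoenberg invoked as a black box for (i)$\Rightarrow$(iii) --- is the standard argument behind this result, and in that sense it matches the paper, which gives no proof of its own but simply points to \citep[Theorem~1.1]{Scovel2010} and remarks that Schoenberg's supporting results carry over to separable Hilbert spaces. Two of your justifications, however, are wrong as stated. First, in (ii)$\Rightarrow$(i) you argue that $e^{-t^{2}\norm{x-y}_{\calX}^{2}}$ is positive definite because, by \eqref{eq:SE relation}, it is the characteristic function of a Gaussian measure on $\calX$. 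When $\calX$ is infinite-dimensional this is false: the covariance operator would have to be a multiple of the identity, which is not trace class, so no such Gaussian measure on $\calX$ exists (this is exactly why the paper invokes Minlos--Sazonov rather than Bochner in infinite dimensions). The conclusion still holds, but for an elementary reason: positive definiteness only involves finite point configurations, any finite configuration lies in a finite-dimensional subspace isometric to some $\bbR^{m}$, and the Gaussian radial function is positive definite on every $\bbR^{m}$.

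Second, your sketch of (i)$\Rightarrow$(iii) uses the finite-configuration argument in the wrong direction. That argument establishes precisely what I just used: positive definiteness on every $\bbR^{n}$ implies positive definiteness on $\calX$. To go the other way --- from $k$ being a kernel on $\calX$ to positive definiteness of $\kappa(\norm{\cdot}_{\bbR^{n}})$ for \emph{every} $n$ --- you must embed $\bbR^{n}$ isometrically into $\calX$, which requires $\calX$ to be infinite-dimensional; positive definiteness on a fixed finite-dimensional space does not force complete monotonicity (the triangular function $\kappa(t)=(1-t)_{+}$ gives a kernel on $\bbR$, yet $\kappa(\sqrt{\cdot})$ is not completely monotone). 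So the extension beyond $\bbR^{d}$ is not purely cosmetic for this implication: infinite-dimensionality of $\calX$ is the hypothesis doing the work and should be made explicit (it is also implicitly needed for the theorem itself to be true). A smaller point: in (iii)$\Rightarrow$(ii), Bernstein--Widder yields $\nu([0,\infty))=\lim_{s\to 0^{+}}\kappa(\sqrt{s})$, so finiteness of $\nu$ and the validity of \eqref{eq:laplace} at $x=y$ require $\kappa$ to be bounded near $0$ and continuous at $0$; with the paper's definition of complete monotonicity (conditions imposed only for $t>0$) this needs to be added, as in the usual formulation where completely monotone functions are taken continuous on $[0,\infty)$.
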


If $\kappa(\sqrt{\cdot})$ is completely monotone then $\kappa$ is continuous and non-increasing. By \eqref{eq:laplace} one can see that if $\kappa$, or equivalently $k$, is zero anywhere, then $\mu$ must be the zero measure hence also $\kappa$ would be identically zero. Therefore if $\kappa$ is not identically zero then it is zero nowhere and $\kappa(0) > 0$. This leaves only the decay condition in Definition \ref{def:h_mode} to be checked on a case by case basis to conclude that a radial kernel corresponds to a function $\kappa$ that satisfies Definition \ref{def:h_mode}. Not all kernels decay to zero, as can be seen in the example of a kernel $k(x,y) = e^{-\norm{x-y}_{\bbR^{d}}^{2}}+1$ on $\bbR^d$. Nevertheless, if the condition of decay to zero in Definition \ref{def:h_mode} is relaxed to 
	\begin{equation}	\label{eq:modified decay}
	\lim_{t\rightarrow \infty}\kappa(t) = \inf_{t\geq 0}\kappa(t)
	\end{equation}
then $\kappa(\sqrt{\cdot})$ being completely monotone ensures this modified decay condition holds. 

In conclusion, if condition \eqref{eq:modified decay} is used in Definition~\ref{def:h_mode} then for any radial kernel $k$ and $\kappa$ defined by \eqref{eq:kernel by kappa} the KME corresponds to an $h$-depth. Note that interestingly \eqref{eq:laplace} shows that any radial kernel is an average of squared exponential kernels with the average being taken over the bandwidth hyper-parameter considered in Remark~\ref{remark:bandwidth for kernel}. 

\smallskip\noindent\textbf{Examples of $h$-depths corresponding to KMEs.}
Examples of functions $\kappa$ that satisfy the conditions of Theorem~\ref{thm:radial} include $\kappa(t) = e^{-t^{2}/2}$ corresponding to the SE-$I$ kernel introduced in \eqref{eq:SE kernel} in Section \ref{subsec:examples}. This is a standard choice in the practice of depth-based methods resulting in the widely used Gauss $h$-depth function. Another example not explored in the literature on $h$-depth is $\kappa(t) = (t^{2}+1)^{-1/2}$ corresponding to the IMQ-$I$ kernel \eqref{eq:IMQ kernel}. In fact, the IMQ-$I$ kernel also appears naturally in the context of statistical analysis as it can be obtained from \eqref{eq:laplace} by setting $\mu$ to be a Gaussian measure restricted to $[0,\infty)$ with appropriate scaling. In more generality, if $T \colon \calX \to \calY$ is a linear map then by Theorem \ref{thm:radial} both the SE-$T$ and IMQ-$T$ kernels are $h$-depths. Indeed, in that situation $k(x,y) = \kappa(\norm{x-y}_{\mathcal{Z}})$ where $\mathcal{Z}$ is the Hilbert space with inner product $\langle x,y\rangle_{\mathcal{Z}} = \langle Tx,Ty\rangle_{\calY}$. 

\subsection{Integrated \texorpdfstring{$h$}{h}-Depth and (Random) Functional Projection \texorpdfstring{$h$}{h}-Depth}\label{subsec:integrated_depth}

Besides the $h$-depths considered throughout this article, another major class of functional depths widely used in practice are the \emph{integrated depths} \citep{Fraiman_Muniz2001, Cuevas2009, Nagy_etal2016, Ramsay_etal2019}. Special cases of integrated depths are the popular modified band depth \cite{LopezPintado2009} and the multivariate functional halfspace depth \cite{Claeskens_etal2014}. Integrated depths are in general obtained by averaging univariate depths of one-dimensional projections of functional data, with respect to a reference measure defined in the dual of the function space $\calX$. For simplicity we will focus on the situation when $\calX$ is a Hilbert space and consider integrated depths of the form 
	\[	\ID(x;P) = \int_{\calX}D_{\kappa_{1}}(\langle x,v\rangle_{\calX};P_{v})d\nu(v)	\] 
where $D_{\kappa_{1}} \colon \bbR \times \calP(\bbR) \to [0,\infty)$ is a one-dimensional $h$-depth \eqref{eq:h_mode} using $\kappa_{1}\colon [0,\infty) \to [0,\infty)$, $P_{v}\in\calP(\bbR)$ is the distribution of $\langle X,v\rangle_{\calX}$ where $X\sim P$ and $v \in \calX$, and $\nu$ is some measure on $\calX$ (identified with its dual using the Riesz representation theorem \citep[Theorem~3.2.1]{Hsing2015}). In this context, we may write
\begin{align}
    \ID(x;P) = \int_{\calX}D_{\kappa_{1}}(\langle x,v\rangle_{\calX};P_{v})d\nu(v) = \int_{\calX}\int_{\calX}\kappa_{1}\left(\lvert\langle x,v\rangle_{\calX}-\langle y,v\rangle_{\calX}\rvert\right)dP(y)d\nu(v).\label{eq:integrated_depth}
\end{align}
A depth quite similar to \eqref{eq:integrated_depth} was used also in \citep{Cuevas2007} under the name \emph{random projection depth}. The term \emph{random} comes from the way this depth is used in practice, as depths of this type typically have to be numerically approximated. The outer integral in \eqref{eq:integrated_depth} is commonly replaced by an average taken over i.i.d.\ realisations $v_1, \dots, v_m \in \calX$ sampled from $\nu$, for $m \in \bbN$ large enough. This \begin{enumerate*}[label=(\roman*)] \item makes the practically used integrated depths inherently random, \item can be computationally expensive if $\nu$ is hard to sample from, and \item necessarily decreases the accuracy of procedures which rely on the depth, and the computation of the depth itself. \end{enumerate*} The next theorem gives conditions under which an integrated $h$-depth \eqref{eq:integrated_depth} can be represented as a KME. In particular, we demonstrate that under mild conditions the depth in \eqref{eq:integrated_depth} can be expressed in a closed form, without the need for a random numerical approximation of the outer integral. Hence, the term \emph{random} can be dropped.

\begin{theorem}\label{thm:integrated_depth}
Let $\calX$ be a Hilbert space and $\ID$ be an integrated $h$-depth of the form \eqref{eq:integrated_depth} for $\kappa_1$ satisfying Theorem~\ref{thm:radial}. Then there exists $\mu_1 \in \calP(\bbR)$ whose Fourier transform $\widehat{\mu}_{1}$ satisfies $\kappa_{1}(\lvert s-t\rvert) = \kappa_1(0) \widehat{\mu}_{1}(s-t)$ for all $s, t \in \bbR$. In addition, $\ID(x;P) = \kappa_1(0)\,\Phi_{k}P(x)$ for $k(x,y) = \widehat{\mu}(x-y)$. Here, $x,y \in \calX$, and $\mu\in\calP(\calX)$ is the probability measure associated with the random element $UV$ in $\calX$ where $U\sim \mu_{1} \in \calP(\bbR)$ and $V\sim\nu \in \calP(\calX)$ are independent.
\end{theorem}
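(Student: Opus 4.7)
The plan is to extract the univariate measure $\mu_1$ via Bochner's theorem applied in one dimension, then unpack the double integral defining $\ID(x;P)$ using that Fourier representation, swap the order of integration, and identify the emerging weight as the characteristic function of the product variable $UV$. So the proof is essentially a Bochner-plus-Fubini calculation.

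First, Theorem~\ref{thm:radial} applied with $\calX = \bbR$ tells us that $(s,t)\mapsto\kappa_1(|s-t|)$ is a continuous, translation-invariant, positive-definite function on $\bbR$ (one direction of the Schoenberg characterisation). Since $\kappa_1(0)>0$, the scaled function $u \mapsto \kappa_1(|u|)/\kappa_1(0)$ is continuous and positive-definite and equals $1$ at the origin. Bochner's theorem on $\bbR$ then supplies a probability measure $\mu_1\in\calP(\bbR)$ whose characteristic function equals this scaled function, yielding $\kappa_1(|s-t|) = \kappa_1(0)\,\widehat\mu_1(s-t)$ for all $s,t \in \bbR$, which is the first claim.

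Substituting this identity into \eqref{eq:integrated_depth}, writing $u\langle x-y,v\rangle_\calX = \langle x-y,uv\rangle_\calX$, and observing that all integrands are bounded by $\kappa_1(0)$ in absolute value while $P$, $\nu$, $\mu_1$ are probability measures, Fubini's theorem permits any reordering, so that
\begin{align*}
\ID(x;P) = \kappa_1(0)\int_\calX \left(\int_\calX \int_\bbR e^{i\langle x-y,\,uv\rangle_\calX}\,d\mu_1(u)\,d\nu(v)\right) dP(y).
\end{align*}
By the independence of $U\sim\mu_1$ and $V\sim\nu$, the inner double integral is $\bbE[e^{i\langle x-y,\,UV\rangle_\calX}] = \widehat\mu(x-y)=k(x,y)$, where $\mu$ is the law of $UV$ on $\calX$. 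Therefore $\ID(x;P) = \kappa_1(0)\int_\calX k(x,y)\,dP(y) = \kappa_1(0)\,\Phi_k P(x)$, as required.

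The argument is essentially mechanical and no serious obstacle is expected. The two points warranting care are: (i) the positive-definiteness of $\kappa_1(|\cdot|)$ on all of $\bbR$, which is exactly why the hypothesis that $\kappa_1$ satisfies Theorem~\ref{thm:radial} is used; and (ii) that $k(x,y)=\widehat\mu(x-y)$ is a bona fide kernel on the possibly infinite-dimensional Hilbert space $\calX$, which is guaranteed by $\widehat\mu$ being the Fourier transform of a Radon probability measure together with the Minlos-Sazonov direction underlying \eqref{eq:SE relation}. The well-definedness of the KME $\Phi_k P$ is then automatic since $k$ is bounded by $1$, so $\calP_k = \calP(\calX)$.
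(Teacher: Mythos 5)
Your proposal is correct and follows essentially the same route as the paper: obtain $\mu_1$ by applying the Schoenberg/Bochner characterisation to the normalised function $\kappa_1(\cdot)/\kappa_1(0)$, substitute the Fourier representation into \eqref{eq:integrated_depth}, swap integrals by Fubini, and recognise the inner integral as the characteristic function of $UV$, giving $\kappa_1(0)\,\Phi_k P(x)$. Your added remarks on the boundedness justifying Fubini and on $k$ being a bona fide kernel with $\calP_k=\calP(\calX)$ are correct but only make explicit what the paper leaves implicit.
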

\begin{proof}
The existence of $\mu_1$ follows from Theorem~\ref{thm:radial} and our remarks on translation invariant kernels from Section~\ref{subsec:examples} applied to function $\widetilde{\kappa}(\cdot) = \kappa_1(\cdot)/\kappa_1(0)$. Denote by $k_1$ the kernel on $\bbR$ corresponding to $\widetilde{\kappa}$. Starting at \eqref{eq:integrated_depth} we can write
\begin{align}
    \ID(x;P) & = \kappa_1(0) \, \int_{\calX}\int_{\calX}\widetilde{\kappa}\left(\lvert\langle x,v\rangle_{\calX}-\langle y,v\rangle_{\calX}\rvert\right)dP(y)d\nu(v)\nonumber\\
    & = \kappa_1(0) \, \int_{\calX}\int_{\calX}k_{1}(\langle x,v\rangle_{\calX},\langle y,v\rangle_{\calX})dP(y)d\nu(v)\label{eq:k_1}\\
    & = \kappa_1(0) \, \int_{\calX}\int_{\calX}\int_{\bbR} e^{i\langle x-y,uv\rangle_{\calX}}d\mu_{1}(u)dP(y)d\nu(v)\label{eq:expand_k_1}\\
    & = \kappa_1(0) \, \int_{\calX}\int_{\calX}\int_{\bbR}e^{i\langle x-y,u v\rangle_{\calX}}d\mu_{1}(u)d\nu(v)dP(y)\label{eq:fubini_swap}\\
    & = \kappa_1(0) \, \int_{\calX}k(x,y)dP(y) = \kappa_1(0) \, \Phi_{k}P(x),\label{eq:Phi_k}
\end{align}
where \eqref{eq:k_1} is by the assumption on $\kappa_{1}$, \eqref{eq:expand_k_1} is by the assumption $k_{1}(s,t) = \widehat{\mu}_{1}(s-t)$, \eqref{eq:fubini_swap} is by Fubini's theorem \cite[Theorem~4.4.5]{Dudley2002} and \eqref{eq:Phi_k} is by noting that $\int_{\calX}\int_{\bbR}e^{i\langle x-y,uv\rangle_{\calX}}d\mu_{1}(u)d\nu(v)$ is the Fourier transform of the random variable $UV$ with $U\sim\mu_1$ and $V\sim\nu$ are independent. 
\end{proof}

A common choice of $\nu$ in practice is a Gaussian measure on $\calX$. The next example shows that the framework of Theorem \ref{thm:integrated_depth} provides a way to obtain the integrated depth \eqref{eq:integrated_depth} in a closed form, circumventing the need for an empirical approximation as previously done in the literature \cite{Cuevas2007, Febrero2008, Claeskens_etal2014}. This important observation brings an original viewpoint on the standardly used (random) functional projection depths, and explains their good performance consistently observed in simulations and real data applications. It also allows direct computation of the sample version of the integrated depth \eqref{eq:integrated_depth} by means of the empirical KME as considered in \eqref{eq:sample KME}. 

\begin{example}\label{exp:integrated_depth}
Consider $\ID(x;P)$ from \eqref{eq:integrated_depth} with $\kappa_{1}(\lvert s-t\rvert) = k_{1}(s,t) = e^{-\frac{1}{2}\left\vert s-t\right\vert^{2}}$, so $\mu_{1}$ is the standard normal on $\bbR$ and $\nu = N_{C}$ for some Gaussian measure with mean zero and covariance operator $C\in L^{+}_{1}(\calX)$. Then the corresponding kernel $k$ from Theorem \ref{thm:integrated_depth} is equal to the IMQ-$C^{1/2}$ kernel from \eqref{eq:IMQ kernel} from Section \ref{subsec:examples}. To see this first note that the IMQ-$C^{1/2}$ kernel may be written as
\begin{align}
    k_{\text{IMQ-$C^{1/2}$}}(x,y) = \left(\norm{C^{1/2}x-C^{1/2}y}_{\calX}^{2}+1\right)^{-1/2}= \int_{\calX}e^{-\frac{1}{2}\langle x-y,v\rangle_{\calX}^{2}}dN_{C}(v),\label{eq:IMQ}
\end{align}
due to the distribution of $\langle x-y,v\rangle_{\calX}$ with $v\sim N_{C}$ being the univariate normal distribution $N(0,\langle C(x-y),x-y\rangle_{\calX})$ and standard Gaussian integral formulas, for a proof see \citep[Section 5]{Wynne2020a}. Now using \eqref{eq:IMQ} along with \eqref{eq:k_1} and Fubini's theorem \cite[Theorem~4.4.5]{Dudley2002} we obtain
\begin{align*}
    \ID(x;P) & = \int_{\calX}\int_{\calX}k_{1}(\langle x,v\rangle_{\calX},\langle y,v\rangle_{\calX})dN_{C}(v)dP(y) \\
    & = \int_{\calX}k_{\text{IMQ-$C^{1/2}$}}(x,y)dP(y) = \Phi_{k_{\text{IMQ-$C^{1/2}$}}}P(x).
\end{align*}
In summary, the KME of the IMQ-$C^{1/2}$ kernel is equal to the integrated $h$-depth (or the functional projection depth) \eqref{eq:integrated_depth} using as $\kappa_{1}$ the one-dimensional Gauss $h$-depth \eqref{eq:h_mode} and $N_{C}$ as the averaging measure $\nu$. The sample version of that integrated $h$-depth does not have to involve numerical approximation, and can be evaluated directly using \eqref{eq:sample KME} as
    \[  \ID(x;P_n) = n^{-1}\sum_{i=1}^{n} (\norm{C^{1/2}x-C^{1/2}X_i}_{\calX}^{2}+1)^{-1/2}.    \] 
\end{example}

\section{Asymptotics of Functional Depth Using KME}\label{sec:h_mode_KME}
 Recall that for a sample $X_1, \dots, X_n$ of i.i.d. random variables from $P \in \calP(\calX)$ we denote by $P_n \in \calP(\calX)$ the corresponding random empirical measure. Consistency results for $h$-depth obtained in \citep{Nagy2015} show almost complete convergence of $D_{\kappa}(x;P_{n})$ to $D_{\kappa}(x;P)$ uniformly across bounded subsets of the input space $\calX$. A different uniform consistency result for $h$-depth without rates of convergence can be found in \cite{Gijbels2017}. For (random) functional projection depth \eqref{eq:integrated_depth} no explicit results regarding the sample version asymptotics are available in the literature.

We are now ready to use the connections observed in Section~\ref{sec:Equivalance} to present an array of new theoretical results regarding the uniform consistency and asymptotic normality of the $h$-depth \eqref{eq:h_mode} and the functional projection depth \eqref{eq:integrated_depth}. We demonstrate that asymptotic results with far weaker assumptions on $\calX$ and with far less complicated proofs are possible to be derived by using the representation \eqref{eq:IPM}, and exploiting the observed connections between $h$-mode depth \eqref{eq:h_mode} and functional projection depth \eqref{eq:integrated_depth} with KMEs. We use the fact that the MMD in \eqref{eq:IPM} between $P_{n}$ and $P$ is an empirical process \citep[Chapter 3]{Gine2015}, for which concentration inequalities are readily available in the literature. 

\smallskip\noindent\textbf{Uniform consistency: Completely observed data.} Our first result involves a notion of convergence called \emph{almost complete convergence}. For a sequence of real-valued random variables $\{X_{n}\}_{n=1}^{\infty}$, a real-valued random variable $X$, and a sequence $\{a_{n}\}_{n=1}^{\infty}\subset[0,\infty)$ converging to zero we write $X_{n}-X = O_{a.co.}(a_{n})$ if there exists some $\varepsilon_{0}>0$ such that $\sum_{n=1}^{\infty}\bbP(\lvert X_{n}-X\rvert > a_{n}\varepsilon_{0})<\infty$. This notion of convergence is stronger than almost sure convergence \citep{Nagy2015}.

\begin{theorem}\label{thm:consistency}
Let $\calX$ be a separable topological space, $k$ be a bounded, continuous kernel on $\calX$ and $P\in\calP(\calX)$. Then for $P_{n}$ the empirical measure formed by i.i.d. observations from $P$ we have
\begin{align*}
    \sup_{x\in\calX}\left\lvert\Phi_{k}P_{n}(x)-\Phi_{k}P(x)\right\rvert = O_{a.co.}\left(\sqrt{\log n/n}\right).
\end{align*}
\end{theorem}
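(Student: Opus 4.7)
The natural approach is to convert the pointwise supremum over $x \in \calX$ into a single RKHS norm bound, and then exploit standard concentration of the maximum mean discrepancy. Since $k$ is bounded, $P, P_n \in \calP_k$, so $\Phi_k P$ and $\Phi_k P_n$ both lie in $\calH_k$ and their difference does too. By the reproducing property and Cauchy--Schwarz (exactly as in \eqref{eq:cauchy_scwhwarz}),
\[
    \sup_{x \in \calX} \bigl\lvert \Phi_k P_n(x) - \Phi_k P(x) \bigr\rvert
    \;\le\; \sup_{x \in \calX} \sqrt{k(x,x)} \cdot \norm{\Phi_k P_n - \Phi_k P}_k
    \;\le\; \sqrt{C} \cdot \MMD_k(P_n, P),
\]
where $C := \sup_{x \in \calX} k(x,x) < \infty$ by boundedness of $k$. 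This reduces the theorem to showing $\MMD_k(P_n, P) = O_{a.co.}(\sqrt{\log n / n})$.

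For the $\MMD$ concentration, I would follow the classical route used in the KME literature. First, a direct computation using \eqref{eq:double_integral} and a symmetrisation / independence argument (writing $\bbE[\MMD_k(P_n, P)^2]$ as a degenerate $U$-statistic after linearity) gives
\[
    \bbE\bigl[\MMD_k(P_n, P)\bigr] \;\le\; \sqrt{\bbE\bigl[\MMD_k(P_n, P)^2\bigr]} \;\le\; \sqrt{C/n}.
\]
Second, viewing $(x_1, \dots, x_n) \mapsto \MMD_k(n^{-1}\sum_i \delta_{x_i}, P)$ as a function with bounded differences, each coordinate change perturbs the functional by at most $2\sqrt{C}/n$ (by the triangle inequality applied to the RKHS norm and the bound $\norm{k(\cdot,x)}_k = \sqrt{k(x,x)}$). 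McDiarmid's inequality then yields
\[
    \bbP\bigl(\MMD_k(P_n, P) > \bbE[\MMD_k(P_n, P)] + t\bigr) \;\le\; \exp\!\bigl(-n t^2 / (2C)\bigr), \qquad t > 0.
\]
Combining these two bounds, for any $\varepsilon > 0$,
\[
    \bbP\!\left(\MMD_k(P_n, P) > (1+\varepsilon)\sqrt{2C \log n / n}\right) \;\le\; n^{-(1+\varepsilon)^2} \quad \text{for all $n$ large enough.}
\]

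Finally, choosing $\varepsilon$ so that the resulting probabilities are summable in $n$ (any $\varepsilon > 0$ works) and inserting this into the first display, we obtain
\[
    \sum_{n=1}^\infty \bbP\!\left(\sup_{x \in \calX}\bigl\lvert \Phi_k P_n(x) - \Phi_k P(x)\bigr\rvert > \varepsilon_0 \sqrt{\log n / n}\right) < \infty
\]
for a suitable $\varepsilon_0 > 0$, which is exactly the definition of $O_{a.co.}(\sqrt{\log n / n})$. The step I expect to require the most care is verifying the bounded-differences constant for McDiarmid; everything else is a clean consequence of boundedness of $k$ and the reproducing property. Separability and continuity of $k$ are not used in the argument above directly, but they ensure measurability of the supremum so that the probability statement makes sense.
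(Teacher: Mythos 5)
Your proposal is correct, and its skeleton is the same as the paper's: bound $\sup_{x\in\calX}\lvert\Phi_{k}P_{n}(x)-\Phi_{k}P(x)\rvert$ by $\sqrt{C}\,\MMD_{k}(P_{n},P)$ via the reproducing property, Cauchy--Schwarz and boundedness of $k$, then establish a Gaussian-type tail for the MMD at scale $\sqrt{\log n/n}$ and sum the probabilities. The difference is in the middle step: the paper simply cites a ready-made concentration inequality for empirical processes over separable Hilbert spaces (giving $\bbP(\MMD_{k}(P_n,P)>\varepsilon)\leq e^{-\frac{1}{2}(\sqrt{n}\varepsilon-1)^{2}}$, with separability of $\calH_k$ supplied by continuity of $k$ and separability of $\calX$), whereas you rederive an equivalent bound from first principles: Jensen plus independence for $\bbE[\MMD_k(P_n,P)]\leq\sqrt{C/n}$, and McDiarmid with bounded-differences constant $2\sqrt{C}/n$ for the fluctuations. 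Your route is more self-contained and makes the constants transparent; the paper's is shorter and delegates the probabilistic work to the cited proposition. Two small points: your displayed bound $n^{-(1+\varepsilon)^{2}}$ silently absorbs the mean term $\sqrt{C/n}$ into the threshold, which is fine for large $n$ since only summability with exponent exceeding $1$ is needed, but is worth stating; and separability/continuity are not only for measurability of the supremum --- they are what make $x\mapsto k(\cdot,x)$ a measurable map into a separable $\calH_k$, so that $\Phi_k P_n-\Phi_k P$ is a genuine random element and its norm (to which you apply Jensen and McDiarmid) is a well-defined random variable; this is exactly the role these hypotheses play in the paper's appeal to the Hilbert-space concentration result.
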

\begin{proof}
First note that by the reproducing property \eqref{eq:reproducing property} of the RKHS and the Cauchy-Schwarz inequality \cite[Corollary~5.1.4]{Dudley2002}
\begin{align*}
    \sup_{x\in\calX}\left\lvert\Phi_{k}P_{n}(x)-\Phi_{k}P(x)\right\rvert & = \sup_{x\in\calX}\left\lvert\langle k(\cdot,x),\Phi_{k}P_{n}-\Phi_{k}P\rangle_{k}\right\rvert\\
    &\leq \sup_{x\in\calX}\sqrt{k(x,x)}\norm{\Phi_{k}P_{n}-\Phi_{k}P}_{k}\leq c\,\norm{\Phi_{k}P_{n}-\Phi_{k}P}_{k},
\end{align*}
for some $c<\infty$ by the boundedness assumption on the kernel. Without loss of generality suppose $c = 1$ and for ease of notation set $E_{n} = \norm{\Phi_{k}P_{n}-\Phi_{k}P}_{k}$. By \eqref{eq:IPM} we know that $E_{n}$ is an empirical process indexed by the RKHS. As $k$ is continuous and $\calX$ is separable the RKHS is also separable \citep[Lemma 4.33]{Steinwart2008}, and we may apply simple concentration inequalities for empirical processes over separable Hilbert spaces. For example, \citep[Proposition A.1]{Tolstikhin2017} gives $\bbP(E_{n}>\varepsilon) \leq e^{-\frac{1}{2}(\sqrt{n}\varepsilon-1)^{2}}$. Setting $\varepsilon_{0} = 2$ in the definition of almost complete convergence gives
\begin{align*}
    \sum_{n=1}^{\infty}\bbP(E_{n}>\varepsilon_{0}\sqrt{\log n/n})\leq \sum_{n=1}^{\infty}e^{-\frac{1}{2}(4\log n -4\sqrt{\log n}+1)} = e^{-\frac{1}{2}}\sum_{n=1}^{\infty}n^{-2}e^{2\sqrt{\log n}} < \infty.
\end{align*}
\end{proof}

Theorem \ref{thm:consistency} is similar to \citep[Theorem 2]{Nagy2015} but the set over which the supremum is taken here does not depend on $n$ and the assumptions on $\calX$ are less restrictive.  By choosing a kernel which satisfies Theorems \ref{thm:h_mode_equivalence} and \ref{thm:consistency} we get a consistency result for $h$-depth. An analogous result for functional projection depth follows directly from Theorem~\ref{thm:integrated_depth} and Example~\ref{exp:integrated_depth}.

Having obtained the consistency and the rate of convergence of $h$-depth we may ask the same questions about the maximizer of the $h$-depth of $P$
	\[	\theta(P) = \argmax_{x \in \calX} D_{\kappa}(x;P).	\]
Exploiting the analogy of $h$-depth and functional pseudo-density estimation \citep{Ferraty_Vieu2006} the point $\theta(P)$ may be interpreted as a functional $h$-mode \cite{Cuevas2007} of the probability measure $P$. In case when the argument of maxima is not a single point set, any measurable selection from that set can be used in the definition of $\theta(P)$. A natural estimator of the functional $h$-mode is its empirical counterpart $\theta_n(P) = \argmax_{x \in \calX} D_{\kappa}(x;P_n)$. Under the assumption of uniqueness of $\theta(P)$, the statistic $\theta_n(P)$ estimates $\theta(P)$ consistently, with an explicit rate of convergence. The proof of this result follows directly from \cite[Section~9.7.3]{Ferraty_Vieu2006} and \cite[Theorem~1]{Dabo_etal2007}. An analogous result for functional projection depth \eqref{eq:integrated_depth} is immediate.

\begin{theorem}\label{thm:mode}
For a kernel satisfying Theorem~\ref{thm:h_mode_equivalence} and under the assumptions of Theorem~\ref{thm:consistency}, suppose in addition that for 
	\[	\calA_\varepsilon = \left\{ x \in \calX \colon D_{\kappa}(x;P) \geq D_{\kappa}(\theta(P);P) - \varepsilon \right\}.	\]
we can write $T(\varepsilon) = \sup_{x,y \in \calA_\varepsilon} \left\Vert x - y \right\Vert_\calX \to 0$ as $\varepsilon \to 0$. Then
\begin{align*}
    \left\Vert \theta_n(P) - \theta(P) \right\Vert_\calX = O_{a.co.}\left(T\left(\sqrt{\log n/n}\right)\right).
\end{align*}
\end{theorem}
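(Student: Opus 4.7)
The plan is to adapt the classical M-estimator consistency argument, reducing the result to the uniform rate of convergence established in Theorem~\ref{thm:consistency}. Set $E_n \coloneqq \sup_{x \in \calX} |D_\kappa(x;P_n) - D_\kappa(x;P)|$. The kernel $k$ corresponding to $\kappa$ via \eqref{eq:kernel by kappa} is bounded (since $\kappa$ is non-increasing with $\kappa(0) < \infty$) and continuous, so Theorem~\ref{thm:consistency}, combined with Theorem~\ref{thm:h_mode_equivalence}, gives $E_n = O_{a.co.}(\sqrt{\log n/n})$; moreover, its proof yields summability of $\bbP(E_n > \varepsilon_0 \sqrt{\log n/n})$ for an appropriately chosen $\varepsilon_0 > 0$.

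The key deterministic step is a sandwich inequality transferring near-optimality from the empirical depth to the true depth. By construction $D_\kappa(\theta_n(P);P_n) \geq D_\kappa(\theta(P);P_n)$, and adding and subtracting the true depth at both arguments yields
\begin{equation*}
D_\kappa(\theta_n(P);P) \geq D_\kappa(\theta_n(P);P_n) - E_n \geq D_\kappa(\theta(P);P_n) - E_n \geq D_\kappa(\theta(P);P) - 2 E_n.
\end{equation*}
Hence $\theta_n(P) \in \calA_{2 E_n}$, and since $\theta(P) \in \calA_\varepsilon$ for every $\varepsilon > 0$, both maximisers lie in $\calA_{2 E_n}$. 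The definition of $T$ then gives the pathwise bound $\norm{\theta_n(P) - \theta(P)}_\calX \leq T(2 E_n)$. Combining this with the tail control of $E_n$ and the monotonicity of $T$ (immediate from the nested structure of $\calA_\varepsilon$), on the event $\{E_n \leq \varepsilon_0 \sqrt{\log n/n}\}$ we obtain $\norm{\theta_n(P) - \theta(P)}_\calX \leq T(2 \varepsilon_0 \sqrt{\log n/n})$, while the complementary event has summable probability.

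The main obstacle is the final absorption of the inner constant $2 \varepsilon_0$ through the constant implicit in $O_{a.co.}(T(\sqrt{\log n/n}))$, because $T$ is not assumed to be homogeneous: for a general non-decreasing $T$ with $T(\varepsilon) \to 0$, scaling the argument by a constant need not correspond to scaling $T$ by a constant. This is handled by a mild regular-variation type condition near zero, such as $T(c\varepsilon) \leq g(c)\,T(\varepsilon)$ for some finite $g(c)$, which is the standard implicit assumption underlying \cite[Section~9.7.3]{Ferraty_Vieu2006} and \cite[Theorem~1]{Dabo_etal2007}; under such regularity the inner constant is pulled out and the stated rate follows. Measurability of $\theta_n(P)$ as a selection from the empirical $\argmax$ is handled in the standard way and does not affect the rate.
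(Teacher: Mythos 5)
Your proposal is correct and is essentially the proof the paper intends: the paper offers no written argument beyond citing \cite[Section~9.7.3]{Ferraty_Vieu2006} and \cite[Theorem~1]{Dabo_etal2007}, and those sources rest on exactly your two ingredients --- the uniform deviation bound $E_n = O_{a.co.}(\sqrt{\log n/n})$ from Theorems~\ref{thm:h_mode_equivalence} and~\ref{thm:consistency}, and the sandwich $D_{\kappa}(\theta_n(P);P) \geq D_{\kappa}(\theta(P);P) - 2E_n$ placing both $\theta_n(P)$ and $\theta(P)$ in $\calA_{2E_n}$, whence $\Vert\theta_n(P)-\theta(P)\Vert_{\calX} \leq T(2E_n)$. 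The constant-absorption caveat you flag is genuine but minor: since the concentration bound forces $\varepsilon_0 > \sqrt{2}$, the argument literally yields the rate $T\bigl(C\sqrt{\log n/n}\bigr)$ for a fixed $C>1$, which coincides with the stated $O_{a.co.}\bigl(T(\sqrt{\log n/n})\bigr)$ exactly under the mild regularity of $T$ you impose (automatic in cases like Example~\ref{exp:mode_rates}, where $T(\varepsilon)=O(\varepsilon^{1/2})$), and this is also how the cited references implicitly operate.
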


A remarkable feature of $h$-depth and functional projection depth is that the convergence in Theorems~\ref{thm:consistency} and~\ref{thm:mode} holds over the whole infinite-dimensional space $\calX$, with no restrictions imposed on $P$. Furthermore, the rates of convergence of the depth do not implicitly depend on the structure, or the complexity of $\calX$, as well as they are not affected by the concentration properties of $P$. Results of this generality and simplicity are rare in nonparametric FDA  \cite{Ferraty_Vieu2006, Ferraty_etal2012}. The rate of convergence of the functional $h$-mode $\theta_n(P)$ does, of course, hinge on the degree of ``peakedness" of the $h$-depth measured by the diameter of its upper level set $\mathcal A_\varepsilon$. In specific cases, the latter quantity can be expressed explicitly, giving us an impression of the typical rates of convergence of the functional $h$-mode estimator.

\begin{example}\label{exp:mode_rates}
For a Gaussian measure $P = N_{m,C} \in \calP(\calX)$ in a separable Hilbert space $\calX$ with mean $m\in\calX$ and covariance operator $C \in L_1^+(\calX)$ the $h$-depth with the SE-$I$ kernel takes the form \citep{Kellner2019}
    \[  D_{\kappa}(x;P) = \left(\det\left(I + C \right)\right)^{-1/2} e^{-\frac{1}{2} \left\langle (I+C)^{-1}(x-m),x-m\right\rangle}.    \]
Here, $\det(I + C) = \prod_{j=1}^\infty(1 + \lambda_j)$ is the Fredholm determinant of the operator $I + C$ with eigenvalues $\left\{1 + \lambda_j \right\}_{j=1}^\infty$, which is well-defined since $C$ is trace class. The $h$-mode $\theta(P)$ is therefore the mean $m$, its depth is $D_{\kappa}(m;P) = \left(\det\left(I+C\right)\right)^{-1/2}$, and the set $\mathcal A_\varepsilon$ can be written as
    \[  \mathcal A_\varepsilon = \left\{ x \in \calX \colon \left\langle (I + C)^{-1}(x-m), x-m \right\rangle \leq c_\varepsilon \right\} \]
with $c_\varepsilon = - 2 \log \left( 1 - \varepsilon \left(\det\left( I + C \right)\right)^{1/2} \right)$. The level set $\mathcal A_\varepsilon$ is therefore the $\calX$-valued ``ellipsoid" of diameter $T(\varepsilon) = 2\sqrt{c_\varepsilon ( 1 + \lambda_1)}$ with $1 + \lambda_1$ the largest eigenvalue of the operator $I + C$. Since $c_\varepsilon = O(\varepsilon)$ we obtain that $T(\varepsilon) = O(\varepsilon^{1/2})$ and using Theorem~\ref{thm:mode} we see that the rate of convergence of the sample $h$-mode is $\left\Vert \theta_n(P) - \theta(P) \right\Vert_\calX = O_{a.co.}\left(\left(\log n/n\right)^{1/4}\right)$.
\end{example}

The following result is similar to \citep[Theorem 1]{Gijbels2015} where it is shown that the $J$-th order adjusted band depth \citep{Gijbels2015} is $\calP$-uniformly strongly consistent when considering a supremum over a uniformly equicontinuous set of functions on $[0,1]$. The adjusted band depth is a functional depth related to $h$-depth if one extends the definition of the former to include $J=1$ \citep[Section 3.2.2]{Nagy2016}. The following theorem improves on the latter consistency result for $h$-depth by not requiring the supremum to be taken over an equicontinuous subset, with more general assumptions on $\calX$. 

\begin{theorem}\label{thm:P_uniform_consistent}
Let $\calX$ be a separable topological space and $k$ be a bounded, continuous kernel on $\calX$. Then for every $\varepsilon > 0$
\begin{align*}
    \sup_{P\in\calP(\calX)}\bbP\left(\sup_{m\geq n} \sup_{x\in\calX}\left\lvert\Phi_{k}P_{m}(x)-\Phi_{k}P(x)\right\rvert > \varepsilon\right)\rightarrow 0.
\end{align*}
\end{theorem}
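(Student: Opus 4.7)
The plan is to mirror the argument used in Theorem~\ref{thm:consistency}, exploiting the fact that the concentration inequality invoked there is actually uniform in $P$. First I would reduce the supremum over $x \in \calX$ to a bound on the RKHS norm. Exactly as at the beginning of the proof of Theorem~\ref{thm:consistency}, the reproducing property together with Cauchy--Schwarz and boundedness of $k$ give
\[
\sup_{x \in \calX}\left\lvert \Phi_{k}P_{m}(x) - \Phi_{k}P(x) \right\rvert \leq c \, \norm{\Phi_{k}P_{m} - \Phi_{k}P}_{k}
\]
for the constant $c = \sup_{x\in\calX}\sqrt{k(x,x)} < \infty$, which depends only on $k$ and not on $P$. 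So it suffices to control $E_{m} := \norm{\Phi_{k}P_{m} - \Phi_{k}P}_{k}$ uniformly in $P$ and in $m \geq n$.

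Next, I would invoke the same concentration inequality used in Theorem~\ref{thm:consistency}, namely \citep[Proposition~A.1]{Tolstikhin2017}, which yields $\bbP\left(E_{m} > \delta\right) \leq e^{-\frac{1}{2}(\sqrt{m}\,\delta - 1)^{2}}$ for all $m$ with $\sqrt{m}\,\delta \geq 1$. The key observation, which is what makes the conclusion $\calP$-uniform, is that the right-hand side of this tail bound does not involve $P$ at all: it is obtained via McDiarmid applied to a bounded kernel together with a universal bound on the expected RKHS distance, and both ingredients are uniform over $P$. Separability of $\calH_k$ (used implicitly when applying the concentration inequality) follows from continuity of $k$ and separability of $\calX$ via \citep[Lemma~4.33]{Steinwart2008}.

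Finally, I would combine a union bound over $m \geq n$ with the above tail estimate. Setting $\delta = \varepsilon/c$ we obtain, for every $P \in \calP(\calX)$ and every $n$ large enough that $\sqrt{n}\,\varepsilon/c \geq 2$,
\[
\bbP\!\left(\sup_{m\geq n}\sup_{x \in \calX}\lvert \Phi_{k}P_{m}(x)-\Phi_{k}P(x)\rvert > \varepsilon\right) \leq \sum_{m=n}^{\infty} \bbP\!\left(E_{m} > \varepsilon/c\right) \leq \sum_{m=n}^{\infty} e^{-\frac{1}{2}(\sqrt{m}\,\varepsilon/c - 1)^{2}}.
\]
The terms of this series decay essentially like $e^{-m\varepsilon^{2}/(2c^{2})}$, so the tail converges to zero as $n \to \infty$. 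Since the right-hand side is free of $P$, taking $\sup_{P\in\calP(\calX)}$ preserves the bound and the claim follows.

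There is no real obstacle here beyond checking that the cited concentration inequality is indeed $P$-free; all other steps are the same as in Theorem~\ref{thm:consistency} combined with a standard Borel--Cantelli-style union bound over the tail. If one wanted to avoid even this mild dependence on the specific form of \citep[Proposition~A.1]{Tolstikhin2017}, one could instead bound $\bbE[E_m^2] \leq 4\sup_{x}k(x,x)/m$ directly and apply McDiarmid's inequality with bounded differences of order $2\sqrt{k(x,x)}/m$, obtaining the same $P$-uniform sub-Gaussian tail.
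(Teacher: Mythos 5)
Your proposal follows essentially the same route as the paper's own proof: reduce the supremum over $x$ to the RKHS norm $E_m$ exactly as in Theorem~\ref{thm:consistency}, apply a union bound over $m \geq n$, and use the $P$-free tail bound from \citep[Proposition~A.1]{Tolstikhin2017} so that the resulting series bound is uniform in $P$ and tends to zero. The extra care you take with the constant $c$ and the validity range of the concentration inequality is a welcome refinement, but the argument is the same.
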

\begin{proof}
Copying the start of the proof of Theorem \ref{thm:consistency}, with the same notation, we have 
\begin{align}
    \bbP(\sup_{m\geq n}E_{m} >\varepsilon)\leq \sum_{m\geq n}\bbP(E_{m} > \varepsilon) \leq \sum_{m\geq n}e^{-\frac{1}{2}(\sqrt{m}\varepsilon - 1)^{2}}.\label{eq:tail_sum}
\end{align} 
The last term in \eqref{eq:tail_sum} converges to zero as $n\rightarrow\infty$ uniformly for all $P\in\calP(\calX)$ since the concentration inequality has no constant dependent on $P$. This completes the proof. 
\end{proof}

\smallskip\noindent\textbf{Uniform consistency: Discretely observed data.} In practice, functional data are seldom observed completely, simply because each random function typically attains a continuum of values. Instead, we standardly observe only discretised versions of the functional data, having the value of each functional observation $X_i \in \calX$ from $P_n \in \calP(\calX)$ recorded only at a finite grid in the domain, with possible measurement errors contaminating the discretised observations. In that setting, it is standard to first perform data pre-processing, consisting of approximating the unobserved function $X_i$ by its estimate $\widetilde{X}_i$ based on the available data. We will use $\widetilde{P}_{n} \in \calP(\calX)$ to denote an empirical distribution of the reconstructed functions $\widetilde{X}_1, \dots, \widetilde{X}_n$. Given assumptions on the quality of the reconstruction, consistency for the estimation of the KME is obtained. These results are, of course, directly applicable to $h$-depth and functional projection depth. 

The next theorem is similar to \citep[Theorem 7]{Nagy2019} but the proof leverages the KME representation of $h$-depth and uses the relationship between MMD and weak convergence to easily obtain the desired convergence result.

\begin{theorem}\label{thm:uniform_conv}
Let $\calX$ be a separable metric space, $k$ a bounded, continuous kernel on $\calX$ and $P\in\calP(\calX)$. Let $P_{n}$ be an empirical measure formed by i.i.d.\ observations from $P$ and let $\widetilde{P}_{n}$ be an approximation of $P_{n}$. Then $\bbP(\widetilde{P}_{n}\xrightarrow[]{w}P) = 1$ implies
\begin{align*}
    \sup_{x\in\calX}\left\lvert\Phi_{k}\widetilde{P}_{n}(x)-\Phi_{k}P(x)\right\rvert\xrightarrow[]{a.s.}0.
\end{align*}
\end{theorem}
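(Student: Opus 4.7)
The strategy mirrors the opening of the proof of Theorem~\ref{thm:consistency} and then reduces the problem to a well-known fact about MMD and weak convergence. The plan is to first replace the supremum by an RKHS norm, and then show that this norm tends to zero almost surely under the weak convergence hypothesis.

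\textbf{Step 1: Reduction to MMD.} By the reproducing property \eqref{eq:reproducing property} applied at the point $x \in \calX$ and Cauchy--Schwarz exactly as at the start of the proof of Theorem~\ref{thm:consistency}, I would write
\begin{align*}
    \sup_{x\in\calX}\left\lvert \Phi_{k}\widetilde{P}_{n}(x) - \Phi_{k}P(x)\right\rvert
    \leq \left(\sup_{x\in\calX}\sqrt{k(x,x)}\right)\,\MMD_{k}(\widetilde{P}_{n},P),
\end{align*}
and boundedness of $k$ bounds the prefactor by a finite constant. Hence it suffices to prove $\MMD_{k}(\widetilde{P}_{n},P) \to 0$ almost surely.

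\textbf{Step 2: MMD is continuous under weak convergence when $k$ is bounded and continuous.} Using representation \eqref{eq:double_integral} from Theorem~\ref{thm:MMD_reps}, I would expand
\begin{align*}
    \MMD_{k}(\widetilde{P}_{n},P)^{2}
    = \int_{\calX}\!\!\int_{\calX}\! k\, d\widetilde{P}_{n}d\widetilde{P}_{n}
    - 2\!\int_{\calX}\!\!\int_{\calX}\! k\, d\widetilde{P}_{n}dP
    + \int_{\calX}\!\!\int_{\calX}\! k\, dP\, dP.
\end{align*}
On the event $\{\widetilde{P}_{n}\xrightarrow{w} P\}$, the product measures satisfy $\widetilde{P}_{n}\otimes\widetilde{P}_{n}\xrightarrow{w} P\otimes P$ and $\widetilde{P}_{n}\otimes P\xrightarrow{w} P\otimes P$ on the separable metric space $\calX\times\calX$ (this is a standard fact about weak convergence of product measures of tight measures). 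Since $k\colon\calX\times\calX\to\bbR$ is bounded and continuous, the Portmanteau theorem gives convergence of each of the three integrals to $\int\!\!\int k\, dP\, dP$, whence $\MMD_{k}(\widetilde{P}_{n},P)\to 0$ on that event.

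\textbf{Step 3: Conclusion.} The assumption $\bbP(\widetilde{P}_{n}\xrightarrow{w} P)=1$ means the event in Step 2 has probability one, so combining Steps 1 and 2 gives the claimed almost-sure convergence.

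The main obstacle, if any, is the measurability and the use of product-weak-convergence on $\calX\times\calX$; both are classical for separable metric spaces, but need invoking the separability hypothesis explicitly so that products of tight Borel measures behave well and the supremum in Step 1 is measurable (separability plus continuity of $k$ ensures separability of $\calH_{k}$ as in \citep[Lemma~4.33]{Steinwart2008}, from which measurability of $\norm{\Phi_{k}\widetilde{P}_{n}-\Phi_{k}P}_{k}$ follows). Apart from these routine checks, the proof is essentially a one-line consequence of the MMD machinery already developed in Sections~\ref{sec:KME} and~\ref{sec:Equivalance}.
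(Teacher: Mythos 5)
Your proposal is correct, and its Step~1 is exactly the reduction the paper makes: bound the supremum by a constant times $\MMD_{k}(\widetilde{P}_{n},P)$ via the reproducing property, Cauchy--Schwarz and boundedness of $k$. Where you differ is in how the key fact $\MMD_{k}(\widetilde{P}_{n},P)\to 0$ on the event $\{\widetilde{P}_{n}\xrightarrow{w}P\}$ is obtained: the paper simply invokes the known result that MMD is dominated by weak convergence for bounded continuous kernels, citing \citep[Lemma~10]{Simon-Gabriel2018} and \citep[Theorem~1.D.1]{Guilbart1978}, whereas you prove it inline from the representation \eqref{eq:double_integral} together with the classical theorem that on a separable metric space weak convergence of marginals implies weak convergence of the product measures, so that integrating the bounded continuous function $k$ against $\widetilde{P}_{n}\otimes\widetilde{P}_{n}$, $\widetilde{P}_{n}\otimes P$ and $P\otimes P$ yields the three-term cancellation. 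Your route is self-contained and makes explicit exactly where separability and the boundedness and continuity of $k$ are used; the paper's route is shorter and rests on a stronger off-the-shelf result (weak convergence dominates the MMD topology). Two small remarks on your version: the parenthetical appeal to tightness is unnecessary, since the product-convergence theorem (Billingsley) requires only separability of the factors, and Borel measures on a separable metric space need not be tight; and the convergence of the three double integrals is just the definition of weak convergence applied to the bounded continuous function $k$ on $\calX\times\calX$, no Portmanteau argument needed. Also note that \eqref{eq:double_integral} applies to $\widetilde{P}_{n}$ without further checks because $k$ is bounded, so $\calP_{k}=\calP(\calX)$.
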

\begin{proof}
As in the proof of Theorem \ref{thm:consistency} and Theorem \ref{thm:P_uniform_consistent} bound the supremum norm by $\text{MMD}_{k}(\widetilde{P}_{n},P)$. By the assumption on $\widetilde{P}_{n}$ we have $\text{MMD}_{k}(\widetilde{P}_{n},P)\rightarrow 0$ with probability one since MMD is dominated by weak convergence, see \citep[Lemma 10]{Simon-Gabriel2018} and \citep[Theorem 1.D.1]{Guilbart1978}.
\end{proof}

A simple sufficient condition ensuring validity of Theorem~\ref{thm:uniform_conv} is given in the next lemma. 

\begin{lemma}\label{lem:weak_conv}
Let $\calX$ be a separable metric space, $X_1, \dots, X_{n}, \dots$ a sequence of i.i.d. observations from $P\in\calP(\calX)$, and let $\widetilde{X}_{1}, \dots, \widetilde{X}_n, \ldots \in\calX$ be a sequence of its independent approximations. Then $\bbE[\norm{\tilde{X}_{n}-X_{n}}_{\calX}]\rightarrow 0$ as $n\rightarrow\infty$ implies $\bbP(\widetilde{P}_{n}\xrightarrow{w}P) = 1$.
\end{lemma}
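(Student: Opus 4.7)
The idea is to prove weak convergence through a metric that \emph{metrises} it on separable metric spaces. The bounded Lipschitz metric
\[
    d_{BL}(\mu,\nu)=\sup_{\norm{f}_{BL}\leq 1}\left\lvert\int f\,d\mu-\int f\,d\nu\right\rvert,\qquad \norm{f}_{BL}=\norm{f}_\infty+\mathrm{Lip}(f),
\]
does precisely that on any separable metric space. I would let $P_n=n^{-1}\sum_{i=1}^n\delta_{X_i}$ denote the (unobservable) true empirical measure and apply the triangle inequality
\[
    d_{BL}(\widetilde{P}_n,P)\leq d_{BL}(\widetilde{P}_n,P_n)+d_{BL}(P_n,P).
\]
Varadarajan's theorem gives $d_{BL}(P_n,P)\to 0$ almost surely, disposing of the second term. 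It remains to show the first term vanishes almost surely.

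\textbf{Controlling the discretisation error.} For any test function $f$ with $\norm{f}_{BL}\leq 1$ we have the pointwise bound $\lvert f(\widetilde{X}_i)-f(X_i)\rvert\leq\min(2,\norm{\widetilde{X}_i-X_i}_{\calX})$, combining the $\norm{f}_\infty\leq 1$ bound with the Lipschitz bound. Averaging and taking the supremum,
\[
    d_{BL}(\widetilde{P}_n,P_n)\leq n^{-1}\sum_{i=1}^n Y_i,\qquad Y_i:=\min(2,\norm{\widetilde{X}_i-X_i}_{\calX}).
\]
Because the pairs $(X_i,\widetilde{X}_i)$ are independent across $i$, the $Y_i$ are independent; by Markov, $\bbE[Y_i]\leq\bbE[\norm{\widetilde{X}_i-X_i}_{\calX}]\to 0$. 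Writing
\[
    n^{-1}\sum_{i=1}^n Y_i=n^{-1}\sum_{i=1}^n(Y_i-\bbE[Y_i])+n^{-1}\sum_{i=1}^n\bbE[Y_i],
\]
the deterministic Ces\`aro average of a null sequence vanishes, and $\sum_{i\geq 1}\mathrm{Var}(Y_i)/i^{2}\leq 4\sum i^{-2}<\infty$ lets Kolmogorov's strong law for independent summands push the centred term to $0$ almost surely. Hence $d_{BL}(\widetilde{P}_n,P_n)\to 0$ a.s., and the triangle inequality then gives $d_{BL}(\widetilde{P}_n,P)\to 0$ a.s., equivalently $\bbP(\widetilde{P}_n\xrightarrow{w}P)=1$.

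\textbf{Main obstacle.} The only real subtlety is that the $Y_i$ are independent but not identically distributed, so the usual i.i.d.\ SLLN is not directly applicable and one cannot simply appeal to an $L^1$ law of large numbers. Uniform boundedness of $Y_i$ by $2$ (achieved by truncating at $2$, which is justified because $\norm{f}_\infty\leq 1$ in the bounded Lipschitz ball) supplies the variance bound required for Kolmogorov's SLLN and is what makes the non-identically-distributed case manageable; combining this with the Ces\`aro argument on the expectations is the clean mechanism for turning the hypothesis $\bbE[\norm{\widetilde{X}_n-X_n}_{\calX}]\to 0$ into an almost-sure statement.
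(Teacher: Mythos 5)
Your proof is correct, but it is a genuinely different and more self-contained argument than the one in the paper. The paper's proof is essentially a two-step citation: first, since $X_n \sim P$ and $\bbE[\norm{\widetilde{X}_n - X_n}_{\calX}] \to 0$, Markov's inequality gives $\norm{\widetilde{X}_n - X_n}_{\calX} \to 0$ in probability, so the individual reconstructions $\widetilde{X}_n$ converge in law to $P$ (as discussed in the proof of Theorem~1 of Nagy et al.\ 2019); second, a Varadarajan-type result for independent but non-identically distributed sequences (Lemma~2 of Nagy--Ferraty 2016) is invoked to pass from convergence in law of the $\widetilde{X}_n$ to almost sure weak convergence of the empirical measures $\widetilde{P}_n$. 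You instead never argue about the law of $\widetilde{X}_n$ itself: you metrise weak convergence by the bounded Lipschitz metric, split $d_{BL}(\widetilde{P}_n,P) \leq d_{BL}(\widetilde{P}_n,P_n) + d_{BL}(P_n,P)$, dispatch the second term by the classical Varadarajan theorem for the true i.i.d.\ empirical measures, and control the coupling term by $n^{-1}\sum_i \min(2,\norm{\widetilde{X}_i - X_i}_{\calX})$, handled through Kolmogorov's SLLN for independent bounded summands together with the Ces\`aro convergence of the expectations. What your route buys is transparency and self-containedness (no appeal to the generalized Varadarajan lemma for non-identically distributed sequences; the only external input is the standard Varadarajan theorem and the fact that $d_{BL}$ metrises weak convergence on separable metric spaces); what the paper's route buys is brevity and reuse of existing FDA literature. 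Both use the independence of the pairs $(X_i,\widetilde{X}_i)$ across $i$ in an essential way, consistent with the lemma's hypothesis. One cosmetic slip: the inequality $\bbE[Y_i] \leq \bbE[\norm{\widetilde{X}_i - X_i}_{\calX}]$ is just monotonicity of expectation (since $\min(2,t) \leq t$), not Markov's inequality; Markov enters the paper's argument, not yours.
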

\begin{proof}
As discussed in the proof of \citep[Theorem 1]{Nagy2019} the assumption on $\widetilde{X}_{n}$ and Markov's inequality show that $\widetilde{X}_{n}$ converges in law to $P$. Applying \citep[Lemma 2]{Nagy2016discrete} completes the proof. 
\end{proof}

Two common observation scenarios in FDA are those of densely, and sparsely observed functional data. In the former case, it is assumed that as the sampling process continues, each $X_n \in \calX$ is observed at an increasingly denser grid of time points in its domain. Under that assumption, Lemma~\ref{lem:weak_conv} is valid in, for example, the reconstruction scheme based on nonparametric smoothing as described in \citep{Nagy2019}. Note, however, that Theorem~\ref{thm:uniform_conv} applies also to situations when the random functions are observed sparsely, meaning that each $X_i \in \calX$ is known only at a finite grid of a limited number of time points. In that situation, information from the pooled sample of all the observed data must be utilised to obtain the reconstructed curves $\widetilde{X}_1, \dots, \widetilde{X}_n$, which are therefore typically no longer independent \cite{Yao_etal2005}. Our only requirement for validity of Theorem~\ref{thm:uniform_conv} is the Varadarajan-type assumption \cite[Theorem~11.4.1]{Dudley2002} of the almost sure weak convergence of $\widetilde{P}_n$ to $P$ that needs to be verified on a case-by-case basis, depending on the exact observation scenario, and the reconstruction method selected.

\smallskip\noindent\textbf{Asymptotic normality.} Aside from ascertaining that a KME (or a functional depth) can be estimated consistently, another desire is to know its asymptotic distribution. Since $\Phi_{k}P_{n}$ is an empirical mean in the RKHS $\calH_{k}$ we can simply employ Hilbert space CLT-type results. In the following theorem we establish a uniform central limit theorem for the sample KME, applicable to both $h$-depth and functional projection depth. Remarkably, the result holds true without any assumptions on the distribution $P \in \calP(\calX)$, over the whole --- possibly infinite-dimensional --- sample space $\calX$. It is the first result of its kind for a functional depth available in the literature. The only comparable results are \citep[Theorem~4]{Cuevas2009} and \citep[Theorem~1]{Nagy_etal2021} where much weaker, non-uniform versions of CLTs are derived for specific integrated functional depths, under restrictive conditions on the distribution $P \in \calP(\calX)$.

\begin{theorem}
Let $\calX$ be a separable topological space, $k$ be a bounded, continuous kernel on $\calX$ and $P\in\calP(\calX)$. Then
\begin{align*}
    n^{\frac{1}{2}}(\Phi_{k}P_{n}-\Phi_{k}P)\xrightarrow[]{d}Z,
\end{align*}
where $Z$ is a Gaussian random element in $\calH_{k}$ with covariance operator $C\in L^{+}_{1}(\calH_{k})$ given by $\langle Cf,g \rangle_{k} = \bbE_{X\sim P}[f(X)g(X)]$. 
\end{theorem}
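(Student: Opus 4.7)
The plan is to recognise $\Phi_{k}P_{n} = n^{-1}\sum_{i=1}^{n}k(\cdot,X_{i})$ as an empirical mean of i.i.d.\ random elements $Y_{i} = k(\cdot,X_{i})$ of the Hilbert space $\calH_{k}$, and to invoke the classical central limit theorem for random elements in a separable Hilbert space with finite second moment. First I would verify the standing hypotheses. As in the proof of Theorem~\ref{thm:consistency}, continuity of $k$ together with separability of $\calX$ imply that $\calH_{k}$ is itself separable \citep[Lemma 4.33]{Steinwart2008}. Boundedness of $k$ gives $\norm{Y_{i}}_{k}^{2} = k(X_{i},X_{i}) \leq M$ almost surely for some $M<\infty$, so $\bbE\norm{Y_{1}}_{k}^{2}<\infty$, and in particular $Y_{1}$ is Bochner integrable with Bochner mean $\bbE[Y_{1}] = \Phi_{k}P$ by Definition~\ref{def:KME}.

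The Hilbert space central limit theorem (see, for instance, \cite[Theorem~7.7.1]{Hsing2015}) then yields
\begin{align*}
    n^{1/2}(\Phi_{k}P_{n}-\Phi_{k}P) \;=\; n^{-1/2}\sum_{i=1}^{n}(Y_{i}-\bbE[Y_{i}]) \;\xrightarrow[]{d}\; Z,
\end{align*}
where $Z$ is a centred Gaussian random element of $\calH_{k}$ whose covariance operator coincides with that of $Y_{1}$. To identify the latter explicitly, the reproducing property \eqref{eq:reproducing property} gives $\langle Y_{1},f\rangle_{k} = f(X_{1})$ for every $f\in\calH_{k}$, and analogously for $g$. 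Consequently
\begin{align*}
    \langle Cf,g\rangle_{k} \;=\; \bbE\left[(f(X_{1})-\bbE f(X_{1}))(g(X_{1})-\bbE g(X_{1}))\right],
\end{align*}
which matches the formula in the statement up to centring of $f(X_{1})$ and $g(X_{1})$. Positivity and symmetry of $C$ are immediate from this expression, and the trace class property follows from $\Tr(C) = \bbE\norm{Y_{1}-\bbE[Y_{1}]}_{k}^{2} \leq \bbE[k(X_{1},X_{1})] \leq M < \infty$, so $C \in L^{+}_{1}(\calH_{k})$.

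There is no real obstacle in this argument: the bounded, continuous kernel assumption trivialises the moment and measurability conditions needed to invoke the Hilbert space CLT, and the whole proof reduces to that classical result combined with the reproducing property. The only subtle point worth flagging is that the covariance operator of the limit $Z$ is naturally given by $\mathrm{Cov}(f(X),g(X))$ rather than by the uncentred second moment $\bbE[f(X)g(X)]$ that appears in the statement; this appears to be a minor notational slip that does not affect the distributional convergence or the trace class property.
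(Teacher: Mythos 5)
Your proof follows essentially the same route as the paper's: separability of $\calH_k$ from \citep[Lemma 4.33]{Steinwart2008}, finiteness of $\bbE_{X\sim P}[k(X,X)]$ from boundedness of $k$, and then the classical CLT for i.i.d.\ random elements of a separable Hilbert space (the paper cites \citep[Theorem 2.1]{horvath2012inference} rather than Hsing and Eubank, an immaterial difference). Your closing remark is also well taken: the limit's covariance operator is the centred covariance $\langle Cf,g\rangle_{k}=\bbE[f(X)g(X)]-\bbE[f(X)]\,\bbE[g(X)]$, so the uncentred formula in the statement is indeed a minor slip rather than a gap in your argument.
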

\begin{proof}
By \citep[Lemma 4.33]{Steinwart2008} the RKHS $\calH_{k}$ is separable. Since $k$ is bounded, $\bbE_{X\sim P}[\norm{k(X,\cdot)}_{k}^{2}] = \bbE_{X\sim P}[k(X,X)]$ is finite. Therefore, we may apply a classic CLT in separable Hilbert spaces \citep[Theorem 2.1]{horvath2012inference}.
\end{proof}

For certain kernels a similar result holds even if the samples from $P$ are weakly dependent, namely if they are $L^{2}$-$m$-approximable in the sense of \citep{Hrmann2010}, see also \citep[Chapter 16]{horvath2012inference}. This is because the weak dependence is inherited by the kernel to become weak dependence in the RKHS, which allows \citep[Theorem 16.3]{horvath2012inference} to be applied.

\section{Characterising Probability Measures Using Functional Depth}\label{sec:characterising}
The depth function $\calX \to [0,\infty) \colon x \mapsto D(x;P)$ is a concept designed to serve as a representative of the underlying probability distribution $P \in \calP(\calX)$ in nonparametric statistics. As such, an important desideratum is the \emph{characterisation property} of a statistical depth, meaning that for any two different distributions $P \ne Q$ in $\calP(\calX)$, there should exist a point $x \in \calX$ that distinguishes $P$ and $Q$ in terms of their depth, i.e. $D(x;P) \ne D(x;Q)$. Despite the vast amount of research on the properties of various depth functions, not many depths are known to satisfy the characterisation property, even in the base case $\calX = \bbR^d$ \cite[Section~3.3]{Mosler_Mozharovskyi2021}. The derivation of characterisation results is generally not easy. Mainly for these reasons, the same problem in function spaces $\calX$ has not received much attention in the literature \citep[Section 8.2.4]{Nagy2016}. So far, no functional depth has been identified to satisfy the characterisation property.

Using Theorem \ref{thm:h_mode_equivalence} we see that our characterisation problem for $h$-depth and functional projection depth reduces to the question of when, for a kernel $k(x,y) = \kappa(\norm{x-y}_{\calX})$ with $\kappa$ satisfying Definition \ref{def:h_mode}, the mean embedding $\Phi_{k}$ is injective. Or, in other words, when does $\MMD_{k}(P,Q) = 0$ imply $P=Q$? Kernels for which this holds are called \emph{characteristic}. Not all kernels have this property.

\begin{example}
Suppose $P,Q\in\calP(\bbR^{d})$ have well defined means. Consider the kernel $k(x,y) = \langle x,y\rangle_{\bbR^{d}}$ on $\bbR^{d}$. Since $\int_{\bbR^{d}}\int_{\bbR^{d}}k(x,y)dP(x)dQ(y) = \langle \bbE_{X\sim P}[X],\bbE_{Y\sim Q}[Y]\rangle_{\bbR^{d}}$, by \eqref{eq:double_integral} we obtain that $\eMMD_{k}(P,Q) = \norm{\bbE_{X\sim P}[X]-\bbE_{Y\sim Q}[Y]}_{\bbR^{d}}$. Hence, $\eMMD_{k}(P,Q)=0$ if and only if $P$ and $Q$ have the same mean, and the kernel $k$ is not characteristic. 
\end{example}

The problem of identifying characteristic kernels has been studied widely in statistical machine learning \citep{Gretton2007,Sriperumbudur2010,Simon-Gabriel2018}. Most research has focused on the case where $\calX\subseteq\bbR^{d}$ for which many characteristic kernels are known \citep{Sriperumbudur2010}. Examples include the SE, IMQ and Mat\'ern kernels, directly providing examples of $h$-depths in $\bbR^d$ that characterise all probability distributions. A concept typically used in conjunction with characteristicness of a kernel is universality. Nevertheless, to realise the link the space $\calX$ needs to be locally compact \citep{Simon-Gabriel2018} which does not hold if $\calX$ is an infinite-dimensional Hilbert space, a common assumption in FDA. Therefore this link is hard to use in our main scenario of interest. 

Theory for characteristic kernels over function spaces $\calX$ has not been studied much. In \citep{Chevyrev2018} a characteristic kernel was derived for probability measures over the space of continuous functions of bounded variation using the signature transform \citep{chevyrev2016primer} which involves an infinite sum in the kernel. In practice, however, the sum has to be truncated, meaning the resulting kernel used in practice is not characteristic. In \citep{Hayati2020} it was shown that the SE-$I$ kernel can distinguish Gaussian probability measures using explicit formulas of the KMEs. A natural generalisation of the established theory of characteristic kernels was presented in \citep{Wynne2020a} which dealt with kernels on separable Hilbert spaces. The main idea is to use the spectral properties of the kernel to prove characteristicness. The next result was proven for $\calX = \bbR^{d}$ in \citep{Sriperumbudur2010} and the proof for an arbitrary Hilbert space is analogous.

\begin{theorem}\label{thm:spectral}
Let $\calX$ be a Hilbert space and $k(x,y) = \widehat{\mu}(x-y)$ for some $\mu\in\calP(\calX)$. Then $\mu$ having full support implies that $k$ is characteristic.
\end{theorem}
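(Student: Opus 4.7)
The plan is to prove this via the $L^2$-representation \eqref{eq:L2} of MMD from Theorem~\ref{thm:MMD_reps} combined with the injectivity of the Fourier transform on probability measures over a Hilbert space. The argument is short and splits into three clean steps.

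First, I would suppose $\MMD_{k}(P,Q) = 0$. Applying the representation \eqref{eq:L2} gives
\begin{equation*}
    \int_{\calX}\lvert\widehat{P}(v)-\widehat{Q}(v)\rvert^{2}d\mu(v) = 0,
\end{equation*}
so the nonnegative integrand vanishes $\mu$-almost everywhere. Let $S = \{v \in \calX : \widehat{P}(v) \neq \widehat{Q}(v)\}$; then $\mu(S) = 0$.

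Second, I would upgrade this to $\widehat{P} = \widehat{Q}$ pointwise on $\calX$ using the full support hypothesis. Both $\widehat{P}$ and $\widehat{Q}$ are continuous on $\calX$: if $x_{n}\to x$ then $e^{i\langle x_{n},v\rangle_{\calX}} \to e^{i\langle x,v\rangle_{\calX}}$ for every $v$, and since $\lvert e^{i\langle x_{n},v\rangle_{\calX}}\rvert = 1$ with $P$ a probability measure, dominated convergence yields continuity. Consequently the set $S$ is open. Since $\mu$ has full support, every non-empty open subset of $\calX$ has positive $\mu$-measure, so $\mu(S) = 0$ forces $S = \emptyset$. Therefore $\widehat{P}(v) = \widehat{Q}(v)$ for every $v\in\calX$.

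Third, I would invoke uniqueness of the Fourier transform on probability measures over a separable Hilbert space (this is a classical result; see for example \citep[Proposition IV.2.5 or Chapter VI]{Vakhania1987}) to conclude $P = Q$, establishing that $k$ is characteristic.

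The only step requiring any real care is the second one --- translating the $\mu$-almost-everywhere identity into an everywhere identity. If one merely assumed $\mu(U) > 0$ for some open $U$, this would fail; the full-support assumption is exactly what is needed to combine with continuity of characteristic functions. The first and third steps are immediate from Theorem~\ref{thm:MMD_reps} and the standard uniqueness theorem for characteristic functions on separable Hilbert spaces, respectively, so there is no substantive technical obstacle.
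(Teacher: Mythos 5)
Your proposal is correct and follows essentially the same route as the paper's proof: apply the representation \eqref{eq:L2} to get $\widehat{P}=\widehat{Q}$ $\mu$-almost everywhere, upgrade to everywhere via continuity of the characteristic functions together with the full-support assumption, and conclude $P=Q$ by injectivity of the Fourier transform of probability measures. Your write-up merely spells out the a.e.-to-everywhere step in more detail than the paper does, which is a harmless elaboration rather than a different argument.
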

\begin{proof}
By \eqref{eq:L2} if $\text{MMD}_{k}(P,Q) = 0$ and $\mu$ has full support then $\widehat{P} = \widehat{Q}$ on every open set of $\calX$. Since $\widehat{P},\widehat{Q}$ are continuous this gives that $\widehat{P} = \widehat{Q}$ and so, by injectivity of the Fourier transform of probability measures \cite{Gine2015}, we obtain $P=Q$.
\end{proof}

The next two examples use Theorem \ref{thm:spectral} to give characteristic kernels. They involve the condition of $C\in L^{+}_{1}(\calX)$ being injective. This is equivalent to $\langle Cx,x\rangle_{\calX} = 0$ if and only if $x = 0$ and holds, for example, if all the eigenvalues of $C$ are strictly positive. In the particular case of $\calX = L^{2}(\calD)$ for $\calD\subseteq{\bbR^{d}}$ it is known that every such operator $C$ is of the form $Cx(s) = \int_{\calD}k_{C}(s,t)x(t)dt$ for some kernel $k_{C}$ on $\calD$ \citep[Theorem 2.1]{Fasshauer2015}. Therefore, if $k_{C}$ is integrally strictly positive definite, meaning $\int_{\calD}\int_{\calD}f(s)k_{C}(s,t)f(t)dsdt \geq 0$ with equality if and only if $f = 0$, then $C$ is injective. For a list of integrally strictly positive kernels see \citep{Sriperumbudur2010}.

\begin{example}\label{exp:SE}
For any $C\in L^{+}_{1}(\calX)$ the SE-$C^{1/2}$ kernel is the Fourier transform of the centred Gaussian measure $N_{C}$ with covariance operator $C$ on $\calX$, see \eqref{eq:SE relation} in Section \ref{subsec:examples}. The measure $N_C$ has full support if $C$ is injective \citep[Proposition 1.2.5]{DaPrato2006}, therefore the SE-$C^{1/2}$ kernel is characteristic if $C$ is injective.  
\end{example}

\begin{example}
In the notation of Theorem \ref{thm:integrated_depth} suppose that $U \sim \mu_{1} \in \calP(\bbR)$ and $V \sim \nu \in \calP(\calX)$ are such that the distribution $\mu \in \calP(\calX)$ of $U V$ has full support in $\calX$. Then the kernel corresponding to the integrated depth \eqref{eq:integrated_depth} is characteristic by Theorem \ref{thm:spectral}. This applies to the IMQ-$C^{1/2}$ kernel, used in Example \ref{exp:integrated_depth}, if $C$ is injective since $\mu_{1}$ being the standard normal has full support in $\bbR$ and $\nu = N_{C}$ has full support too. In particular, the corresponding functional projection depth \eqref{eq:integrated_depth} characterises all probability distributions in $\calX$.
\end{example}

Example \ref{exp:SE} is a simple way to show that the SE kernels are characteristic for certain choices of $C$. The assumption on $C$ is, however, still limiting. For example, it is not satisfied by the identity operator employed in the classical Gauss $h$-depth. The same line of argument was nevertheless extended using a limiting argument in \citep[Theorem 4]{Wynne2020a} to obtain the next result. A consequence of this result is that, for instance, the Gauss $h$-depth also completely characterises all probability measures in separable Hilbert spaces.

\begin{theorem}
Let $\calX,\calY$ be real, separable, Hilbert spaces and $T\colon\calX\rightarrow\calY$ be Borel measurable, continuous and injective. Then SE-$T$ and IMQ-$T$ kernels are characteristic. 
\end{theorem}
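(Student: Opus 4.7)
The strategy has three components: a pushforward reduction to $\calY$, recovery of $P$ from its pushforward via Borel isomorphism, and a limiting spectral argument on $\calY$ that relaxes the trace class hypothesis underlying Example \ref{exp:SE}.

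First, a direct change of variables in the double-integral representation \eqref{eq:double_integral} gives
\begin{equation*}
\MMD_{k_{\text{SE-$T$}}}(P,Q)^{2} = \MMD_{k_{\text{SE-$I$}}^{\calY}}(T_{\#}P, T_{\#}Q)^{2},
\end{equation*}
and an analogous identity for IMQ, where $T_{\#}P$ denotes the pushforward of $P$ under $T$ and $k_{\text{SE-$I$}}^{\calY}(u,v)=e^{-\norm{u-v}_{\calY}^{2}/2}$ is the SE kernel on $\calY$ with identity bandwidth. Since $\calX,\calY$ are Polish and $T$ is continuous and injective, the Kuratowski--Lusin theorem guarantees that $T$ is a Borel isomorphism onto its image, so $T^{-1}\colon T(\calX)\to\calX$ is Borel measurable and the pushforward by $T^{-1}$ recovers $P$ from $T_{\#}P$. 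Thus it suffices to show that vanishing of MMD under $k_{\text{SE-$I$}}^{\calY}$, respectively $k_{\text{IMQ-$I$}}^{\calY}$, forces $T_{\#}P = T_{\#}Q$.

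The obstacle is that for infinite-dimensional $\calY$ the identity is not trace class, so $e^{-\norm{\cdot}_{\calY}^{2}/2}$ is not the Fourier transform of any Borel probability measure on $\calY$, and Theorem \ref{thm:spectral} does not apply directly. The remedy, following \cite{Wynne2020a}, is a limiting argument. Fix an orthonormal basis $\{e_{j}\}_{j\geq 1}$ of $\calY$ and choose strictly positive trace class operators $C_{n}$ diagonal in this basis with eigenvalues $\lambda_{j}^{(n)}\in(0,1]$ satisfying $\sum_{j}\lambda_{j}^{(n)}<\infty$ and $\lambda_{j}^{(n)}\uparrow 1$ as $n\to\infty$. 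Each $N_{C_{n}}$ has full support on $\calY$, and by Example \ref{exp:SE} the kernel $k_{\text{SE-$C_{n}^{1/2}$}}$ is characteristic on $\calY$, with the Fourier representation \eqref{eq:L2} giving
\begin{equation*}
\MMD_{k_{\text{SE-$C_{n}^{1/2}$}}}(T_{\#}P, T_{\#}Q)^{2} = \int_{\calY}\lvert\widehat{T_{\#}P}(v)-\widehat{T_{\#}Q}(v)\rvert^{2}\,dN_{C_{n}}(v).
\end{equation*}
Simultaneously, $\norm{C_{n}^{1/2}z}_{\calY}^{2}\uparrow\norm{z}_{\calY}^{2}$ for each $z\in\calY$, so $e^{-\norm{C_{n}^{1/2}(u-v)}^{2}/2}\downarrow e^{-\norm{u-v}^{2}/2}$ pointwise, and the dominated convergence theorem applied to \eqref{eq:double_integral} with the bounded (by $2$) total variation measure $T_{\#}P-T_{\#}Q$ yields $\MMD_{k_{\text{SE-$C_{n}^{1/2}$}}}(T_{\#}P, T_{\#}Q)^{2}\to \MMD_{k_{\text{SE-$I$}}^{\calY}}(T_{\#}P, T_{\#}Q)^{2}=0$.

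The main difficulty is to combine these two observations to deduce $\widehat{T_{\#}P}\equiv\widehat{T_{\#}Q}$, which then gives $T_{\#}P=T_{\#}Q$ by injectivity of the characteristic function on a separable Hilbert space. The challenge is that although each $N_{C_{n}}$ has full support, the sequence does not collapse to any reference measure of full support on $\calY$ (no such $\sigma$-finite translation-invariant measure exists); one must instead exploit the simultaneous vanishing of the $L^{2}(N_{C_{n}})$-norms of the continuous function $\widehat{T_{\#}P}-\widehat{T_{\#}Q}$ together with the growth of the Cameron--Martin spaces of $N_{C_{n}}$ as $n\to\infty$ to conclude that this function vanishes identically; this is essentially the argument of \cite[Theorem 4]{Wynne2020a}. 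The IMQ-$T$ case is treated in parallel, either through Theorem \ref{thm:radial} (which represents the IMQ radial profile as a positive mixture of SE profiles with varying bandwidths, so characteristicness of IMQ-$T$ follows from that of SE-$T$ at a single bandwidth), or directly via Example \ref{exp:integrated_depth}, which identifies the IMQ-$C^{1/2}$ kernel on $\calY$ with the KME of a Gaussian-projected integrated $h$-depth and thereby reduces the question to the Gaussian case already handled.
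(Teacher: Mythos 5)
Your set-up is fine as far as it goes: the change of variables reducing $\MMD_{k_{\text{SE-$T$}}}(P,Q)$ to the SE-$I$ MMD between the pushforwards on $\calY$, and the Lusin--Souslin/Kuratowski argument that recovers $P$ from $T_{\#}P$, are both correct, and you rightly identify that Theorem~\ref{thm:spectral} cannot be applied directly because the identity on an infinite-dimensional $\calY$ is not trace class. But at exactly the point where the theorem becomes non-trivial, your argument stops being a proof. From the two facts you establish --- (i) $\MMD_{k_{\text{SE-$C_n^{1/2}$}}}(T_{\#}P,T_{\#}Q)^2=\int_{\calY}\lvert\widehat{T_{\#}P}-\widehat{T_{\#}Q}\rvert^2\,dN_{C_n}$ with each $N_{C_n}$ of full support, and (ii) these quantities converge to $\MMD_{k_{\text{SE-$I$}}}^2=0$ --- nothing follows: convergence to zero of the sequence is not vanishing of any individual term, so you never get $\widehat{T_{\#}P}=\widehat{T_{\#}Q}$ on the support of any $N_{C_n}$. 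Worse, the naive way to close this loop fails quantitatively: if $\lvert\widehat{T_{\#}P}-\widehat{T_{\#}Q}\rvert\geq\delta>0$ on a fixed ball $B(v_0,r)$, the lower bound $\delta^2 N_{C_n}(B(v_0,r))$ tends to $0$ anyway, because $\mathrm{tr}(C_n)\to\infty$ forces the mass of every fixed ball under $N_{C_n}$ to vanish. So the ``main difficulty'' you name is the entire content of the theorem, and your resolution of it is a gesture (``exploit\ldots the growth of the Cameron--Martin spaces\ldots this is essentially the argument of \cite{Wynne2020a}'') rather than an argument. Note that the paper itself does not prove this statement either --- it cites \citep[Theorem 4]{Wynne2020a} --- so deferring the decisive step to that same reference means you have not produced an independent proof of anything beyond the (easy) pushforward reduction.

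Two further points. First, your parenthetical fallback for IMQ via Example~\ref{exp:integrated_depth} does not do what you claim: that example, combined with Example~\ref{exp:SE}, yields characteristicness of IMQ-$C^{1/2}$ only for injective \emph{trace class} $C$, whereas the theorem concerns a general continuous injective $T$ (e.g.\ $T=I$), which is precisely the case not covered --- so this route does not ``reduce the question to the Gaussian case already handled''. Second, your primary IMQ route via Theorem~\ref{thm:radial} (writing the IMQ profile as a mixture of squared exponentials over bandwidths, noting each mixture component contributes a nonnegative term to \eqref{eq:double_integral}, and extracting one bandwidth $t_0>0$ at which the SE-type MMD vanishes) is sound in outline, but it is conditional on the SE-$T$ case, i.e.\ on the very step that is missing. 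To complete the proof you would need a genuine argument for why $\int\int e^{-\norm{u-v}_{\calY}^2/2}\,d\mu(u)\,d\mu(v)=0$ forces the finite signed measure $\mu=T_{\#}P-T_{\#}Q$ to vanish --- for instance by factoring $e^{-\norm{u-v}^2/2}=e^{-\norm{u}^2/2}e^{-\norm{v}^2/2}e^{\langle u,v\rangle}$, expanding $e^{\langle u,v\rangle}$ into the nonnegative tensor-power terms $\langle u^{\otimes k},v^{\otimes k}\rangle/k!$ to conclude that all tensor moments of the Gaussian-weighted measure $e^{-\norm{u}^2/2}\,d\mu(u)$ vanish, and then invoking a determinacy/analyticity argument --- or by faithfully reproducing the actual proof of \citep[Theorem 4]{Wynne2020a}; as submitted, that core step is absent.
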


Drawing from the advances in statistical kernel methods, we have demonstrated that multiple kernels commonly used in Hilbert spaces are characteristic. This means that the corresponding $h$-depths and functional projection depths under mild conditions characterise all distributions from $\calP(\calX)$. We obtain the first examples of statistical depths that completely satisfy the characterisation property not only in the Euclidean space $\bbR^d$, but also in infinite-dimensional Hilbert function spaces $\calH$ such as the space $L^2([0,1])$. Consequently, in statistical inference of $P \in \calP(\calX)$ based exclusively on $h$-depth or integrated depths \eqref{eq:integrated_depth}, no information about $P$ is lost, and these depths can be used as tantamount to the probability distribution itself. This observation opens novel research directions in nonparametric FDA, and the related depth-based statistics.


\section{MMD and Empirical Characteristic Function Based Testing}\label{sec:testing}
The main use of MMD in practice, and its original purpose in statistical machine learning, is to perform non-parametric tests \citep{Gretton05,Gretton2007}. We begin by considering the two-sample problem \citep{Gretton2012}, where for $P,Q\in\calP(\calX)$ we are given two independent samples $\{X_{i}\}_{i=1}^{n},\{Y_{i}\}_{i=1}^{n}$ of  i.i.d.\ observations from $P$ and $Q$, respectively. Our intention is to test the null hypothesis $H_{0}\colon P=Q$ against the alternative $H_{1}\colon P\neq Q$. We are assuming for simplicity that we observe an equal number of observations from each distribution but this can be easily generalised.

A natural test statistic based on the expression for MMD from \eqref{eq:double_integral} is the U-statistic
\begin{align}\label{eq:unbiased_U}
    \widehat{\MMD}_{k}(P,Q)^{2} = \sum_{1\leq i < j\leq n} \left(k(X_{i},X_{j})+k(Y_{i},Y_{j})-k(X_{i},Y_{j})-k(X_{j},Y_{i})\right).
\end{align}
The idea is that for $P=Q$ should this test statistic, which approximates $\MMD_{k}(P,Q)^{2}$, be small. The asymptotic distribution of \eqref{eq:unbiased_U} under the null hypothesis is derived from the standard $U$-statistics theory \citep{Gretton2012}. The test procedure is performed using permutation bootstrap, for more details see \citep{Gretton2012}. The two-sample test based on \eqref{eq:unbiased_U} has had wide success in a range of applications given the different types of setups kernels can be defined in, such as random vectors, graphs, or functional data \citep{Gretton2012,Borgwardt2006,Wynne2020a}. 

The connection between MMD-based tests and the existing tests in FDA have so far gone unnoticed, likely due to the way MMD has only recently been applied to functional data. The key link is that if $k(x,y) = \widehat{\mu}(x-y)$ for some $\mu \in \calP(\calX)$ then the representation \eqref{eq:L2} shows that MMD, and hence the MMD-based tests, can be interpreted in terms of a distance between characteristic functions. Therefore, tests based on the empirical approximations of characteristic functions, such as those presented in \citep{Jiang2019} and \citep{Henze2020}, are in fact MMD-based tests. We will discuss two examples, one for two-sample testing for functional data in Section~\ref{sec:two sample} and one for testing Gaussianity of random functions in Section~\ref{sec:Gaussianity}.

\subsection{Connection to Empirical Characteristic Function Two-Sample Test}    \label{sec:two sample}

A recent two-sample test based on empirical characteristic functions of discretised random functions \citep[Equation (15)]{Jiang2019} is very similar to a biased estimate of MMD when using an SE kernel \eqref{eq:SE kernel}. In \citep{Jiang2019} the authors consider functions which may have multi-dimensional outputs. We will consider only univariate outputs for brevity but the conclusions in the general case are the same.

Let $P,Q\in\calP(L^{2}([0,1]))$ and let $\{X_{i}\}_{i=1}^{n}\overset{i.i.d.}{\sim}P$ be independent with $\{Y_{i}\}_{i=1}^{n}\overset{i.i.d.}{\sim}Q$. Suppose we observe $m$-dimensional discretisations of these samples in equidistant points in $[0,1]$ denoted by $\{X_{i}^{(m)}\}_{i=1}^{n}$, $\{Y_{i}^{(m)}\}_{i=1}^{n}$. For example we may suppose that the function $X_i$ is represented by $X_{i}^{(m)} = (X_{i}(0),X_{i}(1/m),\ldots,X_{i}((m-1)/m))$, and analogously for $Y_i$. Denote by $P_{n}^{(m)},Q_{n}^{(m)} \in \calP(\bbR^m)$ the empirical distributions, each formed from $n$ functions observed at $m$ locations. The test in \citep{Jiang2019} is based upon the weighted distance between the empirical characteristic functions of $P_{n}^{(m)}$ and $Q_{n}^{(m)}$ \begin{align*}
    T_{n} = \int_{\bbR^{m}}\left\lvert \widehat{P_{n}^{(m)}}(v)-\widehat{Q_{n}^{(m)}}(v)\right\rvert^{2}e^{-\frac{\norm{v}_{\bbR^{m}}^{2}}{2}}dv.
\end{align*}
Following a similar derivation to the proof of Theorem \ref{thm:MMD_reps} one can obtain \citep[Equation (15)]{Jiang2019}
\begin{align}\label{eq:biased_U}
    T_{n} = \frac{1}{n^{2}}\sum_{i,j=1}^{n}e^{-\norm{X_{i}^{(m)}-X_{j}^{(m)}}_{\bbR^{m}}^{2}/2} - \frac{2}{n^{2}}\sum_{i,j=1}^{n}e^{-\norm{X_{i}^{(m)}-Y_{j}^{(m)}}_{\bbR^{m}}^{2}/2} +\frac{1}{n^{2}}\sum_{i,j=1}^{n}e^{-\norm{Y_{i}^{(m)}-Y_{j}^{(m)}}_{\bbR^{m}}^{2}/2}.
\end{align}
We see that \eqref{eq:biased_U} is almost the same as \eqref{eq:unbiased_U} using $\calX = \bbR^{m}$ and $k(x,y) = e^{-\norm{x-y}_{\bbR^{m}}^2/2}$. The only exception is that \eqref{eq:biased_U} is a biased estimator of the corresponding MMD due to including $i=j$ in the sum in the negative term. In summary, the test statistic used in \citep{Jiang2019} is the same as a biased estimator of MMD, using an SE kernel, between discretised versions of the random functions. This discretisation causes the test to not be able to truly distinguish between $P,Q$ since it only checks the joint distribution at the observation locations. A solution to this problem has been proposed in the literature. It can be shown that if approximations of a certain level of quality are made based on the discretisations then the test statistic will behave as if we had non-discretised samples, see \citep[Theorem 5]{Wynne2020a}.

\subsection{Connection to Empirical Characteristic Function Gaussianity Test}   \label{sec:Gaussianity}

A test for Gaussianity of functional data performed in \citep{Henze2020} uses empirical characteristic functions. That may be viewed as an MMD-based test of normality studied in \citep{Kellner2019}. Let $\calX$ be a Hilbert space, we observe $\{X_{i}\}_{i=1}^{n}\overset{i.i.d.}{\sim}P$ for some $P\in\calP(\calX)$ and we want to test whether $P$ is Gaussian. The following test statistic was proposed in \citep[Equation (4)]{Henze2020} 
\begin{align}
    T_{n} = \int_{\calX}\left\lvert \widehat{P_{n}}(v)-\widehat{N}_{m_{n},\Sigma_{n}}(v)\right\rvert^{2}dN_{C}(v).\label{eq:henze_stat}
\end{align}
Here $m_{n},\Sigma_{n}$ are, respectively, the empirical mean and covariance operator of $\{X_{i}\}_{i=1}^{n}$, $N_{m_{n},\Sigma_{n}}$ is the Gaussian measure with mean $m_{n}$ and covariance operator $\Sigma_{n}$, and $N_{C}$ is some zero mean Gaussian measure on $\calX$ with $C \in L_1^+(\calX)$ given. 

Once again, following the proof of Theorem \ref{thm:MMD_reps} we can rewrite \eqref{eq:henze_stat} using MMD as
\begin{align*}
    T_{n} & = \frac{1}{n^{2}}\sum_{i=1}^{n}k(X_{i},X_{j})-\frac{2}{n}\sum_{i=1}^{n}\int_{\calX}k(X_{i},y)dN_{m_{n},\Sigma_{n}}(y) + \int_{\calX}\int_{\calX}k(x,y)dN_{m_{n},\Sigma_{n}}(x)dN_{m_{n},\Sigma_{n}}(y)\\
    & = \frac{1}{n^{2}}\sum_{i=1}^{n}k(X_{i},X_{j}) - \frac{2}{n}\sum_{i=1}^{n}\Phi_{k}N_{m_{n},\Sigma_{n}}(X_{i}) + \norm{\Phi_{k}N_{m_{n},\Sigma_{n}}}_{k}^{2},
\end{align*}
where $k(x,y) = \widehat{N_{C}}(x-y)$ is the SE-$C^{1/2}$ kernel \eqref{eq:SE kernel} from Section \ref{subsec:examples}. This is the same test statistic as the one derived in \citep[Equation (4.1)]{Kellner2019}. The terms involving $\Phi_{k}N_{m_{n},\Sigma_{n}}$ can be calculated in closed form using standard Gaussian measure integral identities, see \citep[Proposition 4.2]{Kellner2019} and \citep[Theorem 6]{Wynne2020a}. 

Using the observed link to MMD means that an empirical average to approximate the integral in \eqref{eq:henze_stat} is not required. Such an empirical approximation was used in the numerical experiments in \citep{Henze2020}, introducing an unnecessary extra computational cost. In summary, the test for Gaussianity of functional data based on the empirical characteristic function presented in \citep{Henze2020} coincides with the MMD-based tests for normality in \citep{Kellner2019} when using an SE-$C^{1/2}$ kernel.

\section{Conclusion}\label{sec:Conclusion}
In this paper we have shown that both commonly used $h$-depth and (random) functional projection depths for functional data may be realised as KMEs. As a result, alternative and shorter proofs of existing consistency results are presented with more general assumptions. Novel results regarding asymptotic normality of the sample functional depths are provided. The characterisation of probability measures via their $h$-depth and functional projection depth is proved by using characteristic kernels. Finally, tests based on empirical characteristic functions were shown to be equivalent to MMD-based tests due to the spectral representation \eqref{eq:L2} of MMD. 

The future work exploring additional interplays of machine learning and FDA that was not considered in this contribution would consist of employing advances in kernel-based testing to functional data. This includes \begin{enumerate*}[label=(\roman*)] \item independence testing using the \emph{Hilbert-Schmidt independence criterion} \citep{Gretton05} which is the MMD between a joint and a product measure; \item using linear computational cost estimators of MMD \citep{Gretton2012}; and \item goodness-of-fit tests \citep{Liu2016,Chwialkowski2016}.\end{enumerate*} Aside from testing, MMD is also successfully used as a distance for performing parameter inference \citep{Briol2019}, which could be of interest especially in FDA. Specific applications of MMD-based testing to FDA have been demonstrated in \citep{Wynne2020a,Hayati2020}, but there certainly remains a broad space for further applications. 

\section*{Acknowledgements}
The research of G. Wynne was supported by an EPSRC Industrial CASE award [EP/S513635/1] in partnership with Shell UK Ltd. The research of S. Nagy was supported by the grant 19-16097Y of the Czech Science Foundation, and by the PRIMUS/17/SCI/3 project of Charles University. We thank Andrew B. Duncan for helpful comments.

{
\bibliographystyle{abbrvnat}
\bibliography{inf_dim_refs.bib,depth.bib}

}



\end{document}